\theoremstyle{plain}
\newtheorem{thm}{\protect\theoremname}[section]
\theoremstyle{plain}
\newtheorem{lem}[thm]{\protect\lemmaname}
\theoremstyle{definition}
\newtheorem{ass}[thm]{Assumption}
\newtheorem{example}[thm]{Example}
\theoremstyle{plain}
\theoremstyle{remark}
\theoremstyle{thm}
\newtheorem{cor}[thm]{\protect\corollaryname}
\DeclareMathOperator{\var}{Var}
\DeclareMathOperator{\cov}{Cov}
\DeclareMathOperator{\E}{\mathbb{E}}
\DeclareMathOperator{\diam}{diam}
\DeclareMathOperator{\vol}{vol}
\algnewcommand\algorithmicinput{\textbf{Input:}}
\algnewcommand\Input{\item[\algorithmicinput]}
\algnewcommand\algorithmicoutput{\textbf{Output:}}
\algnewcommand\Output{\item[\algorithmicoutput]}
\renewcommand{\hat}{\widehat}
\renewcommand{\tilde}{\widetilde}
\providecommand{\definitionname}{Definition}
\providecommand{\lemmaname}{Lemma}
\providecommand{\propositionname}{Proposition}
\providecommand{\remarkname}{Remark}
\providecommand{\theoremname}{Theorem}
\providecommand{\corollaryname}{Corollary}
\begin{document}
\global\long\def\var{\mathrm{Var}}%
\global\long\def\cov{\mathrm{Cov}}%
\global\long\def\E{\mathbb{E}}%
\global\long\def\P{\mathbb{P}}%
\global\long\def\QED{\hfill\square}%

\title{Asymptotic confidence bands for the histogram regression
estimator}
\author{Natalie Neumeyer$^1$, Jan Rabe$^1$ and Mathias Trabs$^2$}
\date{Universit\"at Hamburg$^1$ and Karlsruhe Institute of Technology$^2$}
\maketitle
\begin{abstract}
Asymptotic uniform confidence bands are constructed for a multivariate nonparametric regression model with heteroscedastic noise, employing histogram estimators under flexible partition conditions. The construction is especially applicable to unsmooth  regression functions of H\"older regularity less than one. While the radius of the confidence bands could be approximated via the Gumbel distribution, our construction does not depend on an extreme value distribution, but instead can be explicitly calculated for the chosen partition.
\end{abstract}

\noindent\textbf{Keywords:} Multivariate nonparametric regression, partitioning estimator, confidence bands, heteroscedastic noise \\
\textbf{MSC 2020:} 62G08, 62G20, 62H12

\section{Introduction}

We consider histogram estimators for the regression function $m$ in a nonparametric regression model $Y=m(X)+\varepsilon$ with multivariate covariates $X$ and heteroscedastic noise. Our aim is to derive asymptotic histogram based confidence bands. 

There is a substantial amount of literature on confidence bands based
on different nonparametric regression estimators. \citet{Johnston1982} has
constructed confidence bands for the Nadaraya-Watson kernel estimator.
\citet{Haerdle1989} has shown asymptotic uniform confidence bands for
a wider class of regression estimators, the M-smoothers. Typically,
confidence bands rely on an undersmoothing of the estimator to
reduce the bias relative to the stochastic error. An alternative is
a direct bias correction, which is used, for instance, by \citet{Eubank1993},
who have considered a deterministic, uniform design for local constant
regression estimation, and by \citet{Xia1998} considering a random design under dependence and local linear
estimation. There are also bootstrap confidence bands for nonparametric
regression, see for instance \citet{Hall1993}, \citet{Neumann1998},
\citet{Claeskens2003} or \citet{Hall2013}. 
Based on spline estimators regression confidence bands were constructed by \citet{Wang-Yang} in the random covariate case, and by \citet{Krivobokova-etal} in the deterministic design case. \citet{Cai-etal} considered confidence bands for the mean and the variance function for fixed design. These confidence bands are based on Nadaraya-Watson-type estimators, where for the variance case the residuals are estimated based on spline-estimators.
\citet{Sabbah2014} has shown
confidence bands for quantile estimators which is an alternative to
a regression model, and \citet{Birke2010} have proved confidence bands
in an inverse regression model.
\\
The aforementioned articles only considered univariate regression. For multidimensional covariates often semiparametric regression models are considered due to the curse of dimensionality. For instance, confidence bands for the link function in a single-index model were developed by  \cite{Gu-Yang}, for additive regression functions by \cite{Haerdle-etal}, and for additive semi-parametric regression functions by \cite{cattaneo2024nonlinear}. In contrast we focus on multivariate nonparametric regression models without additional structural assumptions.
In the multivariate case \citet{Konakov1984} have analyzed the asymptotic
distribution of the maxi\-mal deviation for the Nadaraya-Watson estimate
in a random design setting. \citet{Proksch2016} has proved confidence
bands in a multivariate regression model with fixed, deterministic
design for a general class of estimators that includes, for example,
local polynomial estimators. \citet{Chao2017} have covered confidence
bands for multivariate quantile and expectile regression with mean regression as a special case.
\\
Due to their easy and efficient implementation and clear interpretability histogram regression estimators (also called regressogram) enjoy a high popularity and widespread application, see e.g.\ \cite{Gyoerfi2002,wasserman2006}. Also in the development of advanced statistical methodologies, histogram estimators continue to serve as a fundamental building block, e.g.\ for regression trees \cite{Breiman1984} or in the context of privacy, see \cite{berrett2021strongly}.
In a density estimation setting the study of confidence bands goes back to \citet{Smirnov1950}. The regression histogram estimators are the simplest case of partitioning-based estimators whose asymptotic properties are studied in depth by \cite{Cattaneo-etal}. The latter article also includes confidence bands with a focus on general (bias corrected) partitioning-based estimators. For the case of multivariate histogram estimators the applicability of their construction is however limited. We want to close this gap by presenting an easy to implement confidence band result in a multivariate regression model for histogram estimators under very mild conditions on the partition. This especially allows us to incorporate a-priori knowledge on the regression function into the construction of the partition to improve the finite sample performance.
\\
In an undersmoothing regime we construct an honest asymptotic uniform confidence band in the class of $\alpha$-H\"older regular regression functions for $\alpha\in(0,1]$. Note that all above mentioned articles on confidence bands require the regression function to be at least continuously differentiable while we will allow for regression functions which are less than Lipschitz regular. The diameter of our confidence band is naturally determined by the locally constant variance of the histogram estimator together with quantiles of the distribution of the maximum $\max_{j=1,\dots,\Delta}|Z_j|$ of independent standard normal random variables $(Z_j)_{j\ge1}$ where $\Delta$ is given by the number of cells of the partition of the histogram. Whereas our confidence intervals do not need to be based on an extreme value distribution, our critical values can be related to the quantiles of the Gumbel distribution. We also incorporate the estimation of the unknown variance and a possible unknown distribution of the covariates.
\\
We obtain a confidence band result for the simple case of a H\"older continuous regression function for every covariate dimension, without a continuity assumption on the covariate density. In contrast, for smooth nonparametric regression estimators it is typically assumed that for increasing covariate dimension the number of existing derivatives of the regression function and the covariate density has to increase, see, for instance, \cite{Chao2017}. Instead we only exploit finite higher moments of the error distribution, an assumption which is especially satisfied for finite exponential moments. 

 A central part of the proof for the histogram based confidence band is the application of a result by \citet{Chernozhukov2014}
regarding the Gaussian approximation for the suprema of empirical
processes.
Using this direct approximation of the supremum, it is not necessary
to approximate a whole empirical process uniformly.
This method has also been applied by \citet{Patschkowski2019} to construct adaptive confidence bands for
probability densities.
It is a key difference to previous results in the literature like those by \citet{Johnston1982},
\citet{Claeskens2003} or \citet{Chao2017}, and  the proof
method might be extended to different estimators.

The main result is presented and discussed in Section~\ref{sec:CB histo}. The proof strategy is outlined in Section~\ref{sec:Proof-strategy}. In Section \ref{sec:simus} we show a numerical illustration. Some conclusions are given in Section \ref{sec:conclusion}. Detailed proofs are postponed to Section~\ref{sec:Proofs} and in Appendix \ref{sec:extension} extensions are discussed.

\section{Histogram estimator and confidence bands}\label{sec:CB histo}

On some probability space $(\Omega,\mathcal{A},\mathbb{P})$ we observe
the training sample $(X_{i},Y_{i})_{i=1}^{n}$ consisting
of $n$ observations which are independent copies of the random variable
$(X,Y)$ with $X\in\mathcal X$, $Y\in\mathbb{R}$ and where $\mathcal X\subseteq\mathbb R^p$ is some bounded feature space. The goal is
to estimate the regression function $m\colon\mathcal X\to\mathbb{R}$ with
\begin{equation*}
Y=m(X)+\varepsilon 
\end{equation*}
for an observation error $\varepsilon$ with $\mathbb{E}[\varepsilon|X]=0$ and variance function $\sigma^2(x)=\var (\varepsilon|X=x)$.
We further assume that the
density $f_{X}$ of the covariate or feature vector $X$ and the variance function are bounded from below and above, i.e.
\begin{eqnarray}\label{cX}
0<c_{X}\leq f_{X}(\cdot)\leq C_{X},\\
0<c_{\sigma^2}\leq \sigma^2(\cdot)\leq C_{\sigma^2}\label{csigma}
\end{eqnarray}
for some constants $c_X,C_X,c_{\sigma^2},C_{\sigma^2}>0$.

The histogram estimator relies on a local constant approximation of $m$. For a given partition of the feature space $\mathcal X$ the regression function is estimated by the cell-wise average of the response variables in each cell of the partition. We will impose the following regularity assumption on the partition:
\begin{ass}\label{ass:part}
  Let $\bigcup_{k=1}^\Delta A_k=\mathcal X$ be a partition of $\mathcal X$ of size $\Delta=\Delta_n\in\mathbb N$ into disjoint subsets $A_k\subset\mathcal X,k=1,\dots,\Delta$, satisfying for some constants $c_p,C_p>0$ that
  \begin{equation}\label{ass:cell}c_p\Delta^{-1}\le\vol(A_k)\le C_p\Delta^{-1}\text{ and }\diam(A_k)\le C_p \Delta^{-1/p}\text{ for all }k=1,\dots,\Delta
  \end{equation}
  with volume $\vol(A_k)$ and diameter $\diam(A_k):=\sup_{x_{1},x_{2}\in A_k}\Vert x_{1}-x_{2}\Vert$ of $A_k$.
\end{ass}
On the one hand, the assumption guarantees that the overall volume $\vol(\mathcal X)$ is roughly equally distributed over the cells of the partition. The condition on diameter ensures that the cells of the partition do not degenerate in any direction, but instead spread similarly in all directions. On the other hand, Assumption~\ref{ass:part} allows for a flexible choice of the partition such that, e.g.\ prior knowledge of the regression function could be incorporated in the construction to improve the finite sample performance. In Section~\ref{sec:simus} we will see a numerical example in this direction.

For $x\in\mathcal X$ let $A_{\Delta}(x)$ be the subset $A_k$ that contains $x$.
The histogram estimator is then defined by
\begin{equation}\label{hat-m}
\hat{m} (x)=\sum_{j=1}^{n}Y_{j}\frac{\mathbb{I}\{X_{j}\in A_{\Delta}(x)\}}{\sum_{i=1}^{n}\mathbb{I}\{X_{i}\in A_{\Delta}(x)\}} 
\end{equation}
where $\mathbb{I}\{\cdot\}$ denotes the indicator function. 
If there is no observation in $A_{\Delta}(x)$ we define the estimator
as zero on this cell. The simplicity of histogram estimators comes with some computational advantages. First, they have lower computational costs than kernel estimators when many evaluations are needed ($O((m\wedge \Delta)n)$ instead of $O(mn)$ for $m$ point evaluations). Second, they serve as summary statistics to compress data and reduce storage space from $n$ observations to only $\Delta$ cell-wise means.

In the following theorem we present a confidence band result based on the histogram estimator. 
Let $\mathcal{H}^\alpha(C_{H})$ denote the set of all bounded $\alpha$-H\"older functions $m\colon\mathcal X\to\mathbb{R}$ with H\"older regularity $\alpha\in(0,1]$ and both H\"older constant and supremum norm bounded by $C_H>0$.

\begin{thm}
\label{thm:CB histo} Let the partition satisfy Assumption~\ref{ass:part}. Let $\alpha\in(0,1],C_H>0$ and assume that
\begin{equation}
n\Delta^{-1-2\alpha/p}(\log n)^2\to0.\label{eq:assum histo approx}
\end{equation}
Assume that $\sup_{x\in \mathcal X}\mathbb{E}[|\varepsilon|^3|X=x]<\infty$ and $\mathbb{E}\left[\vert\varepsilon\vert^{\nu}\right] <\infty$ for some $4\leq \nu\in\mathbb{N}$ such that
\begin{align}
\frac{\Delta(\log n)^{5}}{n^{1-2/\nu}} & \to0.\label{eq:assum histo GP approx}
\end{align}
For independent standard normally distributed
random variables $(Z_{j})_{j\in\mathbb{N}}$ and some $\beta\in(0,1)$ let $c_{\Delta}(\beta)>0$ be given by
\begin{equation}\label{eq:c}
\mathbb{P}\left(\max_{j=1,\ldots,\Delta}\vert Z_{j}\vert\leq c_{\Delta}(\beta)\right)=1-\beta.
\end{equation}
For 
\begin{equation}\label{eq:tau}\tau_\Delta(x)
=\frac{\mathbb{P} (X\in A_\Delta(x))^2}{\E[\sigma^2(X)\mathbb{I}_{A_\Delta(x)}(X)]}
\end{equation}
the confidence band
\[
\mathcal{C}_{n}(x)=\left[\hat{m} (x)-c_{\Delta}(\beta)(\tau_\Delta(x)n)^{-1/2},\hat{m} (x)+c_{\Delta}(\beta)(\tau_\Delta(x)n)^{-1/2}\right]
\]
satisfies
\[
\liminf_{n\to\infty}\inf_{m\in\mathcal{H}^\alpha(C_{H})}\mathbb{P}\left(m(x)\in\mathcal{C}_{n}(x),\,\forall x\in\mathcal X\right)\geq1-\beta.
\]
\end{thm}

The proof strategy is explained in Section \ref{sec:Proof-strategy}, and the proofs are given in Section~\ref{sec:Proofs}. While the theorem assumes the covariate distribution and variance function to be known, we will consider a replacement of $\tau_\Delta(x)$ by an estimator below. Note that by Gaussian concentration, we have
\begin{equation}\label{Rate-S}
\max_{j=1,\ldots,\Delta}\vert Z_{j}\vert= \mathcal{O}_{\mathbb{P}}(\sqrt{2\log \Delta}) = \mathcal{O}_{\mathbb{P}}(\sqrt{\log n})    
\end{equation}
due to $\log \Delta\lesssim\log n$ which is implied by assumption (\ref{eq:assum histo GP approx}). Therefore, $c_\Delta(\beta)\lesssim \sqrt{\log n} $.
Moreover, the quantile $c_\Delta(\beta)$ can be explicitly represented in terms of the quantile function of the standard normal distribution since
$$ 
  1-\beta=\mathbb{P}\left(\max_{j=1,\ldots,\Delta}\vert Z_{j}\vert\leq c_{\Delta}(\beta)\right)
  =\mathbb{P}\left(\vert Z_{1}\vert\leq c_{\Delta}(\beta)\right)^\Delta
  =(2\Phi(c_\Delta(\beta))-1)^\Delta.
$$
with cdf $\Phi$ of $\mathcal N(0,1)$. Consequently, $c_\Delta(\beta)=\Phi^{-1}(\frac{1}{2}(1+(1-\beta)^{1/\Delta}))$ is easy to compute.

Define
\[
p_\Delta(x)=\mathbb{P}(X\in A_{\Delta}(x))=\int_{A_\Delta(x)}f_X(t)\,dt.
\]
By (\ref{cX}) and (\ref{csigma}) we obtain 
\begin{equation}\label{eq:bounds p_x_0}
c_{X}\vol(A_\Delta(x))\leq p_\Delta(x)\leq C_{X}\vol(A_\Delta(x))
\end{equation}
and
\begin{equation}\label{eq:bounds tau_x}
\frac{c_{X}}{C_{\sigma^2}}\vol(A_\Delta(x))\leq \tau_\Delta(x)\leq \frac{C_{X}}{c_{\sigma^2}}\vol(A_\Delta(x)).
\end{equation}
Due to Assumption~\ref{ass:part} we have $\vol(A_\Delta(x))=\mathcal O(\Delta^{-1})$ uniformly in $x\in\mathcal X$ and thus the diameter of the confidence band $\mathcal C_n$ is of order $\sqrt{(\log n)\Delta/n}$ uniformly on $\mathcal X$.

\begin{example}\label{example}
  Let $\mathcal X=[0,1]^p$ and consider a partition based on an equidistant grid such that all cells $(A_k)_{k=1,\dots,\Delta}$ are hypercubes of the same size. Denoting the edge length of the cubes by $\delta>0$, we have $\Delta=\delta^{-p}$, $\diam(A_k)=\sqrt p\delta=\sqrt p\Delta^{-1/p}$ and $\vol(A_k)=\delta^p=\Delta^{-1}$. 
    In the homoscedastic case with constant $\sigma^2(\cdot)\equiv \sigma^2$ we obtain $\tau_\Delta(x)=p_\Delta(x)/\sigma^2$. For uniformly distributed covariates on $[0,1]^p$ we have $p_\Delta(x)=\Delta^{-1}$ and in that case the confidence band simplifies to
\[
\mathcal{C}_{n}(x)=\Big[\hat{m} (x)-c_{\Delta}(\beta)\sigma\sqrt{\frac{\Delta}{n}},\hat{m} (x)+c_{\Delta}(\beta)\sigma\sqrt{\frac{\Delta}{n}}\Big].
\]\end{example}

The assumptions (\ref{eq:assum histo approx}) and (\ref{eq:assum histo GP approx}) in Theorem~\ref{thm:CB histo} 
are up to logarithms that
$$
n/\Delta^{1+2\alpha/p}  \to0 \text{ and } n/(\Delta n^{2/\nu})  \to\infty,
$$
where $2/\nu$ is arbitrary small if all moments of $\varepsilon$ exist.
Since the approximation error of the histogram estimator is of order $\Delta^{-\alpha/p}$ while the stochastic error term is of order $(\Delta/n)^{1/2}$, the first assumption is an undersmoothing condition. The second condition
implies that the average number of observations in all cells goes to infinity such that the estimator is consistent. As $p$ increases, the assumptions become
more restrictive which is the common curse of dimensionality problem for nonparametric function estimation. However, the assumptions 
are not contradictory, and for every dimension $p$ no bias reduction is needed.  

\smallskip

The special case of confidence bands based on a histogram with a cubic partition is also covered by \citet[Theorem~6.1 in combination with Remark~6.2]{Cattaneo-etal}. For general partitions in dimensions $p\ge 2$ Theorem~6.2 in \cite{Cattaneo-etal} requires more regular regression functions compared to Theorem~\ref{thm:CB histo}. More precisely, they need $\Delta^3/n\to 0$ (in our notation) instead of \eqref{eq:assum histo GP approx}. An optimal choice $\Delta=n^{p/(2\alpha+p)}$ (up to logarithms, see Corollary~\ref{cor-rate}) satisfies $\Delta^3/n\to 0$ only for $\alpha>p$. \cite{Cattaneo-etal} also assume differentiability of the regression function. Theorem~\ref{thm:CB histo} does not require that the regression function's and covariate density's regularities have to grow with the dimension, which is a common restriction in the literature on nonparametric regression with multivariate covariates, see  \cite{Proksch2016} and \cite{Chao2017} for confidence bands, and  \cite{Dette-Derbort} and \cite{Neumeyer-VanKeilegom}, among others, for different estimation and testing problems.

\smallskip

Typically, confidence bands regression functions rely on an extreme value distribution that is independent of $\Delta$. Based on Theorem~\ref{thm:CB histo} and the Gumbel extreme value distribution, we can derive an analogous result. Defining
\begin{eqnarray*}
    a_\Delta&=&\sqrt{2\log\Delta}\qquad\text{and}\\
    b_\Delta&=&\sqrt{2\log\Delta}-\frac{\log(\log\Delta)+\log(4\pi)-\log(4)}{\sqrt{8\log\Delta}},
\end{eqnarray*}
the critical value $c_\Delta(\beta)$ can be approximated by 
\begin{equation}\label{eq:cTilde}
\tilde c_\Delta(\beta)=b_\Delta-\frac{\log(-\log(1-\beta))}{a_\Delta}.
\end{equation}
Let the confidence interval $\tilde{\mathcal{C}}_{n}(x)$ be defined as $\mathcal{C}_{n}(x)$, but replacing $c_\Delta(\beta)$ by $\tilde c_\Delta(\beta)$. The proof of the corollary is given in Section~\ref{sec:Proofs}. 
\begin{cor}\label{limit-distribution} Under the assumptions of Theorem \ref{thm:CB histo} it holds that
\[
\liminf_{n\to\infty}\inf_{m\in\mathcal{H}^\alpha(C_{H})}\mathbb{P}\left(m(x)\in\tilde{\mathcal{C}}_{n}(x),\,\forall x\in\mathcal X\right)= 1-\beta.
\]
\end{cor}

\smallskip

\noindent
In contrast to the corollary Theorem~\ref{thm:CB histo} provides bands with explicit dependence on $\Delta$, and the
accuracy of the bands does not depend on the rate of convergence to
that limit distribution. Indeed, the approximation accuracy of $\tilde c_\Delta(\beta)$ increases only slowly in the number of cells $\Delta$ and consistently overestimates $c_\Delta(\beta)$ as shown in Figure~\ref{fig:quantiles}. Applying an approximation by the maximum of Gaussian random variables instead of a limit distribution is also suggested by \cite{Kreiss-etal}
for confidence bands for the spectral density in time series analysis.

\begin{figure}\centering
\begin{tikzpicture}[x=.5pt,y=.5pt]
\definecolor{fillColor}{RGB}{255,255,255}
\path[use as bounding box,fill=fillColor,fill opacity=0.00] (0,60) rectangle (813.79,500);
\begin{scope}
\path[clip] ( 49.20, 61.20) rectangle (788.59,474.00);
\definecolor{drawColor}{RGB}{0,0,255}

\path[draw=drawColor,line width= 0.8pt,line join=round,line cap=round] ( 76.58,419.71) --
	(138.82,203.09) --
	(201.06,143.24) --
	(263.30,117.93) --
	(325.54,104.79) --
	(387.78, 97.07) --
	(450.01, 92.14) --
	(512.25, 88.80) --
	(574.49, 86.42) --
	(636.73, 84.67) --
	(698.97, 83.35) --
	(761.21, 82.33);
\end{scope}
\begin{scope}
\path[clip] (  0.00,  0.00) rectangle (813.79,523.20);
\definecolor{drawColor}{RGB}{0,0,0}

\path[draw=drawColor,line width= 0.4pt,line join=round,line cap=round] ( 49.20, 61.20) --
	(788.59, 61.20) --
	(788.59,474.00) --
	( 49.20,474.00) --
	cycle;
\end{scope}
\begin{scope}
\path[clip] (  0.00,  0.00) rectangle (813.79,523.20);
\definecolor{drawColor}{RGB}{0,0,0}

\node[text=drawColor,anchor=base,inner sep=0pt, outer sep=0pt, scale= 0.8] at (418.90, 15.60) {$c_\Delta(0.05)$};

\end{scope}
\begin{scope}
\path[clip] ( 49.20, 61.20) rectangle (788.59,474.00);
\definecolor{drawColor}{RGB}{255,0,0}

\path[draw=drawColor,line width= 0.8pt,dash pattern=on 4pt off 4pt ,line join=round,line cap=round] ( 76.58,199.08) --
	(138.82,112.93) --
	(201.06, 92.97) --
	(263.30, 85.58) --
	(325.54, 82.12) --
	(387.78, 80.27) --
	(450.01, 79.17) --
	(512.25, 78.47) --
	(574.49, 78.00) --
	(636.73, 77.68) --
	(698.97, 77.44) --
	(761.21, 77.26);
\end{scope}
\begin{scope}
\path[clip] (  0.00,  0.00) rectangle (813.79,523.20);
\definecolor{drawColor}{RGB}{0,0,0}

\path[draw=drawColor,line width= 0.4pt,line join=round,line cap=round] ( 49.20, 61.20) -- (761.21, 61.20);

\path[draw=drawColor,line width= 0.4pt,line join=round,line cap=round] (138.82, 61.20) -- (138.82, 55.20);

\path[draw=drawColor,line width= 0.4pt,line join=round,line cap=round] (263.30, 61.20) -- (263.30, 55.20);

\path[draw=drawColor,line width= 0.4pt,line join=round,line cap=round] (387.78, 61.20) -- (387.78, 55.20);

\path[draw=drawColor,line width= 0.4pt,line join=round,line cap=round] (512.25, 61.20) -- (512.25, 55.20);

\path[draw=drawColor,line width= 0.4pt,line join=round,line cap=round] (636.73, 61.20) -- (636.73, 55.20);

\path[draw=drawColor,line width= 0.4pt,line join=round,line cap=round] (761.21, 61.20) -- (761.21, 55.20);

\node[text=drawColor,anchor=base,inner sep=0pt, outer sep=0pt, scale= 0.8] at (138.82, 39.60) {$10^2$};

\node[text=drawColor,anchor=base,inner sep=0pt, outer sep=0pt, scale= 0.8] at (263.30, 39.60) {$10^4$};

\node[text=drawColor,anchor=base,inner sep=0pt, outer sep=0pt, scale= 0.8] at (387.78, 39.60) {$10^6$};

\node[text=drawColor,anchor=base,inner sep=0pt, outer sep=0pt, scale= 0.8] at (512.25, 39.60) {$10^8$};

\node[text=drawColor,anchor=base,inner sep=0pt, outer sep=0pt, scale= 0.8] at (636.73, 39.60) {$10^{10}$};

\node[text=drawColor,anchor=base,inner sep=0pt, outer sep=0pt, scale= 0.8] at (761.21, 39.60) {$10^{12}$};

\path[draw=drawColor,line width= 0.4pt,line join=round,line cap=round] ( 49.20, 76.49) -- ( 49.20,458.72);

\path[draw=drawColor,line width= 0.4pt,line join=round,line cap=round] ( 49.20, 76.49) -- ( 43.20, 76.49);

\path[draw=drawColor,line width= 0.4pt,line join=round,line cap=round] ( 49.20,203.90) -- ( 43.20,203.90);

\path[draw=drawColor,line width= 0.4pt,line join=round,line cap=round] ( 49.20,331.31) -- ( 43.20,331.31);

\path[draw=drawColor,line width= 0.4pt,line join=round,line cap=round] ( 49.20,458.72) -- ( 43.20,458.72);

\node[text=drawColor,rotate= 90.00,anchor=base,inner sep=0pt, outer sep=0pt, scale= 0.8] at ( 34.80, 76.49) {0.0};

\node[text=drawColor,rotate= 90.00,anchor=base,inner sep=0pt, outer sep=0pt, scale= 0.8] at ( 34.80,203.90) {0.1};

\node[text=drawColor,rotate= 90.00,anchor=base,inner sep=0pt, outer sep=0pt, scale= 0.8] at ( 34.80,331.31) {0.2};

\node[text=drawColor,rotate= 90.00,anchor=base,inner sep=0pt, outer sep=0pt, scale= 0.8] at ( 34.80,458.72) {0.3};
\end{scope}
\begin{scope}
\path[clip] ( 49.20, 61.20) rectangle (788.59,474.00);
\definecolor{drawColor}{RGB}{211,211,211}

\path[draw=drawColor,line width= 0.4pt,dash pattern=on 2pt off 2pt on 6pt off 2pt ,line join=round,line cap=round] ( 49.20, 76.49) -- (788.59, 76.49);

\path[draw=drawColor,line width= 0.4pt,dash pattern=on 2pt off 2pt on 6pt off 2pt ,line join=round,line cap=round] ( 49.20,140.19) -- (788.59,140.19);

\path[draw=drawColor,line width= 0.4pt,dash pattern=on 2pt off 2pt on 6pt off 2pt ,line join=round,line cap=round] ( 49.20,203.90) -- (788.59,203.90);

\path[draw=drawColor,line width= 0.4pt,dash pattern=on 2pt off 2pt on 6pt off 2pt ,line join=round,line cap=round] ( 49.20,267.60) -- (788.59,267.60);

\path[draw=drawColor,line width= 0.4pt,dash pattern=on 2pt off 2pt on 6pt off 2pt ,line join=round,line cap=round] ( 49.20,331.31) -- (788.59,331.31);

\path[draw=drawColor,line width= 0.4pt,dash pattern=on 2pt off 2pt on 6pt off 2pt ,line join=round,line cap=round] ( 49.20,395.01) -- (788.59,395.01);

\path[draw=drawColor,line width= 0.4pt,dash pattern=on 2pt off 2pt on 6pt off 2pt ,line join=round,line cap=round] ( 49.20,458.72) -- (788.59,458.72);
\definecolor{drawColor}{RGB}{0,0,255}

\path[draw=drawColor,line width= 0.8pt,line join=round,line cap=round] (565,445) -- (590.82,445);
\definecolor{drawColor}{RGB}{255,0,0}

\path[draw=drawColor,line width= 0.8pt,dash pattern=on 4pt off 4pt ,line join=round,line cap=round] (565,405) -- (590.82,405);
\definecolor{drawColor}{RGB}{0,0,0}

\node[text=drawColor,anchor=base west,inner sep=0pt, outer sep=0pt, scale= 0.8] at (600,440) {$\tilde c_\Delta(0.05)-c_\Delta(0.05)$};

\node[text=drawColor,anchor=base west,inner sep=0pt, outer sep=0pt, scale= 0.8] at (600,400) {$\displaystyle\frac{\tilde c_\Delta(0.05)-c_\Delta(0.05)}{c_\Delta(0.05)}$};
\end{scope}
\end{tikzpicture}
  \caption{Absolute error $\tilde c_\Delta(\beta)- c_\Delta(\beta)$ and relative error $(\tilde c_\Delta(\beta)- c_\Delta(\beta))/c_\Delta(\beta)$ for $\beta=0.05$ in dependence of $\Delta$.}\label{fig:quantiles}
\end{figure}

From Theorem \ref{thm:CB histo} we obtain a uniform rate of convergence of the histogram estimator. Then we obtain the following corollary whose proof is given in Section~\ref{sec:Proofs}. 

\begin{cor}\label{cor-rate}
     Let the partition satisfy Assumption~\ref{ass:part}. Assume $\sup_{x\in \mathcal X}\mathbb{E}[|\varepsilon|^3|X=x]<\infty$ and $\mathbb{E}\left[\vert\varepsilon\vert^{\nu}\right] <\infty$ for some $4\leq \nu\in\mathbb{N}$ such that $(\log n)^{5}\Delta/n^{1-2/\nu} \to0$.
   Then we have 
\[
\sup_{x\in\mathcal X}|\hat m(x)-m(x)|=\mathcal{O}_{\mathbb{P}}\left(\Delta^{-\alpha/p}+\sqrt{(\log n)\Delta/n}\right).
\]
For $\nu>2+p/\alpha$ and $\Delta=\lfloor (n/\log n)^{p/(2\alpha+p)}\rfloor$ we obtain the uniform rate of convergence $\mathcal O_{\mathbb P}((n/\log n)^{-\alpha/(2\alpha+p)})$.
  \end{cor}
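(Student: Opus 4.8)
The plan is to split the error of the histogram into a deterministic approximation part and a centered stochastic part, bound each without invoking the undersmoothing assumption~(\ref{eq:assum histo approx}), and then insert the stated bandwidth.

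Set $\hat p_\delta(x)=n^{-1}\sum_{i=1}^n\mathbb{I}_{A_\delta(x)}(X_i)$. By~(\ref{eq:bounds p_x_0}) each of the $\delta^{-p}$ cells of the grid is empty with probability at most $(1-c_X\delta^p)^n\le e^{-c_X n\delta^p}$, and since~(\ref{eq:assum histo GP approx}) forces both $n\delta^p/\log n\to\infty$ and $\log(\delta^{-p})\lesssim\log n$, a union bound shows that with probability tending to one every cell is occupied. On that event $\hat m$ never falls back to zero and
\[
\hat m(x)-m(x)=\underbrace{\frac{n^{-1}\sum_{i}\big(m(X_i)-m(x)\big)\mathbb{I}_{A_\delta(x)}(X_i)}{\hat p_\delta(x)}}_{=:B_n(x)}+\underbrace{\frac{n^{-1}\sum_{i}\varepsilon_i\,\mathbb{I}_{A_\delta(x)}(X_i)}{\hat p_\delta(x)}}_{=:S_n(x)}
\]
for every $x$. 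Whenever $\mathbb{I}_{A_\delta(x)}(X_i)=1$ the points $X_i$ and $x$ lie in a common cube of edge length $\delta$, so $\alpha$-Hölder regularity gives $|m(X_i)-m(x)|\le C_H|X_i-x|^\alpha\le C_H p^{\alpha/2}\delta^\alpha$; being a weighted average of such terms, $\sup_{x}|B_n(x)|\le C_H p^{\alpha/2}\delta^\alpha=\mathcal O(\delta^\alpha)$ deterministically on this event. No bias reduction enters here, which is precisely why the rate retains the summand $\delta^\alpha$; it is indispensable for the bandwidth chosen below, for which~(\ref{eq:assum histo approx}) fails.

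The substantial part is to show $\sup_x|S_n(x)|=\mathcal O_{\mathbb P}(\sqrt{(\log n)/(n\delta^p)})$. This bound also emerges from the proof of Theorem~\ref{thm:CB histo}, where~(\ref{eq:assum histo approx}) is used only to render the bias negligible relative to the band width, so one may simply quote it; alternatively it is obtained self-contained as follows. Since $S_n$ is constant on cells, $\sup_x|S_n(x)|=\max_{A}N_A^{-1}\big|\sum_{i:X_i\in A}\varepsilon_i\big|$ over the $\delta^{-p}$ cells $A$, with $N_A=n\hat p_\delta$ on $A$. A Bernstein bound for the binomial counts $N_A$ and a union bound over the cells give $N_A\ge\tfrac12 c_X n\delta^p$ for all $A$ with probability tending to one. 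For the numerators I would truncate the errors at level $M_n\asymp\sqrt{n\delta^p/\log n}$: the event that some $|\varepsilon_i|$ exceeds $M_n$ has probability at most $n\,\E[|\varepsilon|^\nu]/M_n^{\nu}\to0$ (this is where $\E[|\varepsilon|^\nu]<\infty$ and~(\ref{eq:assum histo GP approx}) are consumed); the bias from recentering the truncated variables is $\mathcal O(\log n/(n\delta^p))=o(\sqrt{(\log n)/(n\delta^p)})$ using $\sup_{x}\E[|\varepsilon|^3\mid X=x]<\infty$; and for the bounded truncated sums Bernstein's inequality with variance proxy $n\,\E[\sigma^2(X)\mathbb{I}_A(X)]\lesssim n\delta^p$ (from~(\ref{csigma}) and~(\ref{eq:bounds p_x_0})) together with a union bound over the $\delta^{-p}$ cells yields maximal deviations of order $\sqrt{(n\delta^p)\log n}$, hence $N_A^{-1}$ times this is of the claimed order. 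The expected main obstacle is exactly this union bound over the $\delta^{-p}$ cells under only a polynomial moment on $\varepsilon$; the truncation level is tuned to~(\ref{eq:assum histo GP approx}) for that purpose. When in addition~(\ref{eq:assum histo approx}) holds one may instead invoke Theorem~\ref{thm:CB histo} with $\beta\downarrow0$ and use $c_\delta(\beta)\lesssim\sqrt{\log n}$ from~(\ref{Rate-S}) and $\tau_\delta(x)\gtrsim\delta^p$ from~(\ref{eq:bounds tau_x}). Adding the two bounds on the event of probability tending to one gives the first assertion.

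For the rate take $\delta=\delta_n$ to be the largest number with $\delta_n^{-1}\in\mathbb N$ and $\delta_n\le(n/\log n)^{-1/(2\alpha+p)}$, which only changes constants. Then $n\delta_n^{p}\asymp n^{2\alpha/(2\alpha+p)}(\log n)^{p/(2\alpha+p)}$, so both $\delta_n^{\alpha}$ and $\sqrt{(\log n)/(n\delta_n^{p})}$ are of order $(n/\log n)^{-\alpha/(2\alpha+p)}$. It remains to verify the hypothesis of the first part: $\delta_n^{p}n^{1-2/\nu}\asymp n^{\,2\alpha/(2\alpha+p)-2/\nu}(\log n)^{p/(2\alpha+p)}$, whose exponent of $n$ is positive exactly when $\nu>(2\alpha+p)/\alpha=2+p/\alpha$; in that case $\delta_n^{p}n^{1-2/\nu}$ grows polynomially and dominates $(\log n)^{5}$, so $(\log n)^{5}/(\delta_n^{p}n^{1-2/\nu})\to0$, the first part applies, and we obtain the uniform rate $\mathcal O_{\mathbb P}((n/\log n)^{-\alpha/(2\alpha+p)})$.
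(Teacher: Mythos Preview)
Your proposal is correct. The first route you sketch for the stochastic term---quoting the proof of Theorem~\ref{thm:CB histo}, noting that assumption~(\ref{eq:assum histo approx}) enters only through the bias term $T_1$, and combining $T_2+T_3=o_{\mathbb P}((\log n)^{-1})$ with~(\ref{Rate-S}) and~(\ref{eq:bounds tau_x})---is exactly the paper's own argument, and your bandwidth verification at the end matches the paper's as well.

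Your alternative, self-contained treatment of $\sup_x|S_n(x)|$ via truncation at $M_n\asymp\sqrt{n\delta^p/\log n}$ plus Bernstein's inequality and a union bound over the $\delta^{-p}$ cells is a genuinely different route. It bypasses the Gaussian approximation machinery of \citet{Chernozhukov2014} entirely and is more elementary; its cost is that it does not yield distributional information (which is irrelevant for the corollary anyway). The paper instead leverages that this machinery has already been set up for Theorem~\ref{thm:CB histo} and simply re-reads the rate off $\mathbf S_\delta+T_2+T_3$. Your handling of the approximation error on the event ``all cells occupied'' is also slightly cleaner than the paper's expectation bound with the empty-cell indicator, though both lead to the same $\mathcal O(\delta^\alpha)$.
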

This convergence rate coincides the minimax-optimal rate for the uniform loss, cf. \cite{Tsybakov2009}. The proof is given in Section~\ref{sec:Proofs}. 

\smallskip

We will now consider the case where the covariate distribution and the variance function of the noise are unknown. To this end, let $\hat\tau_\Delta(x)$ be an estimator for $\tau_\Delta(x)$ from \eqref{eq:tau} and define 
 the confidence band as 
\[
\hat{\mathcal{C}}_{n}(x)=\left[\hat{m} (x)-c_{\Delta}(\beta)(\hat \tau_\Delta(x)n)^{-1/2},\hat{m} (x)+c_{\Delta}(\beta)(\hat \tau_\Delta(x)n)^{-1/2}\right].
\]

\begin{cor}\label{cor-estimated-px}
    Under the assumptions of Theorem \ref{thm:CB histo} and if
    $$\sup_{x\in\mathcal X}\left|\frac{\hat\tau_\Delta(x)}{\tau_\Delta(x)}-1\right|=o_{\mathbb{P}}((\log n)^{-3/2}),$$
    we have
\[
\liminf_{n\to\infty}\inf_{m\in\mathcal{H}^\alpha(C_{H})}\mathbb{P}\left(m(x)\in \hat{\mathcal{C}}_{n}(x),\,\forall x\in\mathcal X\right)\geq1-\beta.
\]
  \end{cor}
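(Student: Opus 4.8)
The plan is to deduce the coverage of $\hat{\mathcal C}_n$ from that of $\mathcal C_n$ in Theorem~\ref{thm:CB histo}, exploiting that the two bands differ only through the factor $(\tau_\delta(x)/\hat\tau_\delta(x))^{1/2}$ in their radius, together with the anti-concentration of $\max_{j\le\delta^{-p}}|Z_j|$. Put $\hat T_n:=\sup_{x\in[0,1]^p}(\tau_\delta(x)n)^{1/2}|\hat m(x)-m(x)|$ and $M_n:=\max_{j=1,\dots,\delta^{-p}}|Z_j|$, so that $\{m(x)\in\mathcal C_n(x)\ \forall x\}=\{\hat T_n\le c_\delta(\beta)\}$ and likewise $\{m(x)\in\hat{\mathcal C}_n(x)\ \forall x\}=\{\sup_x(\hat\tau_\delta(x)n)^{1/2}|\hat m(x)-m(x)|\le c_\delta(\beta)\}$. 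From the hypothesis I would first extract a deterministic sequence $\eta_n\to0$ with $\eta_n(\log n)^{3/2}\to0$ such that the event $\mathcal E_n:=\{\sup_x|\hat\tau_\delta(x)/\tau_\delta(x)-1|\le\eta_n\}$ satisfies $\inf_{m\in\mathcal H^\alpha(C_H)}\mathbb P(\mathcal E_n)\to1$ (the uniformity in $m$ is automatic when $\hat\tau_\delta$ is built from the covariates and the noise variance only, and should otherwise be read into the assumption). On $\mathcal E_n$ we have $\hat\tau_\delta(x)\le(1+\eta_n)\tau_\delta(x)$ for every $x$, hence $\sup_x(\hat\tau_\delta(x)n)^{1/2}|\hat m(x)-m(x)|\le(1+\eta_n)^{1/2}\hat T_n$, and consequently
\[
\mathbb P\big(m(x)\in\hat{\mathcal C}_n(x)\ \forall x\big)\ \ge\ \mathbb P\big(\hat T_n\le(1+\eta_n)^{-1/2}c_\delta(\beta)\big)-\mathbb P(\mathcal E_n^{c}).
\]

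It then remains to bound $\mathbb P(\hat T_n\le(1+\eta_n)^{-1/2}c_\delta(\beta))$ from below. The proof of Theorem~\ref{thm:CB histo} delivers more than the coverage statement at the single threshold $c_\delta(\beta)$: via the Gaussian approximation of \citet{Chernozhukov2014} for the noise part and the uniform control of the undersmoothed bias through \eqref{eq:assum histo approx}, it yields $\mathbb P(\hat T_n\le r)\ge\mathbb P(M_n\le r)-o(1)$ for all $r\ge0$, uniformly over $m\in\mathcal H^\alpha(C_H)$, with $\mathbb P(M_n\le c_\delta(\beta))=1-\beta$ by the definition of $c_\delta(\beta)$. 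Hence it suffices to show $\mathbb P(M_n\le(1+\eta_n)^{-1/2}c_\delta(\beta))\to1-\beta$, i.e.\ that shrinking the threshold by $(1+\eta_n)^{-1/2}$ is asymptotically harmless. For this I would use $c_\delta(\beta)\lesssim\sqrt{\log n}$ from \eqref{Rate-S}, which gives $0\le c_\delta(\beta)-(1+\eta_n)^{-1/2}c_\delta(\beta)\le c_\delta(\beta)\eta_n=o((\log n)^{-1})=:r_n$, together with the Gaussian anti-concentration bound from \citet{Chernozhukov2014}, i.e.\ $\sup_{t}\mathbb P(|M_n-t|\le r_n)\lesssim r_n\sqrt{\log(\delta^{-p})}\lesssim r_n\sqrt{\log n}=o((\log n)^{-1/2})\to0$. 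Combining these with the display above and taking $\liminf_n\inf_m$ finishes the argument, since $\mathbb P(\mathcal E_n^{c})\to0$ uniformly in $m$ as well.

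The step I expect to need the most attention is confirming that the proof of Theorem~\ref{thm:CB histo} really produces the uniform-in-$r$ Gaussian approximation $\sup_{r\ge0}|\mathbb P(\hat T_n\le r)-\mathbb P(M_n\le r)|=o(1)$ (uniformly over $m\in\mathcal H^\alpha(C_H)$) rather than only the coverage identity at $r=c_\delta(\beta)$; this is, however, exactly the Kolmogorov-distance form in which the approximation of \citet{Chernozhukov2014} is used there, together with the anti-concentration argument already needed to absorb the bias, so it comes at no extra cost. Everything else is bookkeeping with the factor $(1+\eta_n)^{-1/2}$, and it is precisely the anti-concentration step that explains the polylogarithmic rate imposed on $\hat\tau_\delta/\tau_\delta-1$: any rate $o((\log n)^{-1})$ — a fortiori $o((\log n)^{-3/2})$ — makes $c_\delta(\beta)\eta_n\sqrt{\log n}$, the net cost of replacing $\tau_\delta$ by $\hat\tau_\delta$ inside the anti-concentration bound, tend to zero. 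No probabilistic estimate beyond those already used for Theorem~\ref{thm:CB histo} is required.
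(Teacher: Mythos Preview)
Your proposal is correct and uses essentially the same ingredients as the paper: the representation $\hat T_n=\mathbf S_\delta+o_{\mathbb P}((\log n)^{-1})$ established in \eqref{repr}, the bound $c_\delta(\beta)\lesssim\sqrt{\log n}$ from \eqref{Rate-S}, and the Gaussian anti-concentration estimate of \citet{Chernozhukov2014a}. The only cosmetic difference is in the bookkeeping: the paper absorbs the effect of replacing $\tau_\delta$ by $\hat\tau_\delta$ as an \emph{additive} remainder, bounding
\[
\Big|\sqrt n\sup_x\hat\tau_\delta(x)^{1/2}|\hat m(x)-m(x)|-\hat T_n\Big|\le \hat T_n\cdot\sup_x\Big|\frac{\hat\tau_\delta(x)}{\tau_\delta(x)}-1\Big|=\mathcal O_{\mathbb P}((\log n)^{1/2})\cdot o_{\mathbb P}((\log n)^{-3/2})=o_{\mathbb P}((\log n)^{-1}),
\]
and then reruns the last display of the proof of Theorem~\ref{thm:CB histo} with this enlarged remainder; you instead phrase the same relation multiplicatively as a threshold shift $(1+\eta_n)^{-1/2}c_\delta(\beta)$ and appeal to the uniform-in-$r$ form of the Gaussian approximation, which \eqref{repr} indeed yields. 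Both routes are equivalent and rely on exactly the same probabilistic input.
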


The proof is again postponed to Section~\ref{sec:Proofs}.  To construct the estimator $\hat\tau_\Delta(x)$, we first consider the homoscedastic case where $\tau_\Delta(x)=p_\Delta(x)/\sigma^2$ can be estimated by $\hat \tau_\Delta(x)=\hat p_\Delta(x)/\hat\sigma^2$ with 
\begin{equation}
\label{hatpx}
\hat p_\Delta(x)=\frac1n \sum_{i=1}^n \mathbb{I}\{X_{i}\in A_{\Delta}(x)\}.
\end{equation}
Note that $\hat p_\Delta(x)$ is the density histogram estimator for the density function $f_X$ of the covariate distribution. This estimator has the desired rate which we show in Lemma~\ref{lem:pHat}. The constant variance can be estimated using residuals based on the histogram regression estimator, i.e.
$$\hat\sigma^2=\frac1n \sum_{i=1}^n \big(Y_i-\hat m(X_i)\big)^2.$$
Other variance estimators under homoscedasticity and variance function estimators under heteroscedasticity are considered by \cite{Shen2020}, among others.  Nevertheless, one can also apply a histogram estimator for the variance function $\sigma^2(x)=\E[Y^2|X=x]-m(x)^2$ defined by
\begin{eqnarray*}
    \hat\sigma_\Delta^2(x)&=&\frac{\sum_{i=1}^n Y_i^2\mathbb{I}\{X_i\in A_\Delta(x)\}}{\sum_{i=1}^n \mathbb{I}\{X_i\in A_\Delta(x)\}}-\hat m(x)^2\\
    &=&\frac{\sum_{i=1}^n (Y_i-\hat m(x))^2\mathbb{I}\{X_i\in A_\Delta(x)\}}{\sum_{i=1}^n \mathbb{I}\{X_i\in A_\Delta(x)\}}.
\end{eqnarray*}
Then the uniform rate of convergence $\mathcal{O}_{\mathbb{P}}(\sqrt{(\log n)\Delta/n})$ holds as in Corollary \ref{cor-rate} under the same assumptions for observations $(X,Y^2)$, in particular under moment conditions and H\"older continuity of $\sigma^2(\cdot)$. 
The histogram estimator $\hat\sigma_\Delta^2(\cdot)$ is constant on each subset $A_\Delta(x)$ and thus 
\begin{equation}\label{s_x}
    s_\Delta(x) = 
    \E\big[\sigma^2(X)\mathbb I_{A_\Delta(x)}(X)\big]=\int_{A_\Delta(x)}\sigma^2(t)f_X(t)\,dt
\end{equation}
is approximated by $\int_{A_\Delta(x)} \hat\sigma_\Delta^2(x)f_X(t)\,dt=\hat\sigma_\Delta^2(x)p_\Delta(x)$. Hence, we can estimate $\tau_\Delta(x)=p_\Delta(x)^2/s_\Delta(x)$ by
$$\hat\tau_\Delta(x)=\frac{\hat p_\Delta(x)}{\hat\sigma_\Delta^2(x)}$$
and obtain the rate assumed in Corollary \ref{cor-estimated-px}.

\section{Proof strategy\label{sec:Proof-strategy}}


In this section we outline the proof. The estimation error can be decomposed as
\begin{equation}
\hat{m} (x)-m(x)=\hat{m}^{(m)}(x)-m(x)+\hat{m}^{(\varepsilon)}(x)\label{eq:error decomp histo}
\end{equation}
with approximation error $\hat{m}^{(m)}(x)-m(x)$ and stochastic error $\hat{m}^{(\varepsilon)}(x)$, where
\begin{eqnarray*}
    \hat{m} ^{(m)}(x)&=&\sum_{j=1}^{n}m(X_{j})\frac{\mathbb{I}\{X_{j}\in A_{\Delta}(x)\}}{\sum_{i=1}^{n}\mathbb{I}\{X_{i}\in A_{\Delta}(x)\}}\qquad\text{and}\\
    \hat{m}^{(\varepsilon)}(x)&=&\sum_{j=1}^{n}\varepsilon_{j}\frac{\mathbb{I}\{X_{j}\in A_{\Delta}(x)\}}{\sum_{i=1}^{n}\mathbb{I}\{X_{i}\in A_{\Delta}(x)\}}.
\end{eqnarray*}

First, we discuss a uniform bound on the approximation error.
Note that $\hat{m}^{(m)}(x)=\mathbb{E}\left[\hat{m}(x)\mid(X_{j})_{j=1}^{n}\right]$ corresponds to the regression estimate in a setting without observation noise. Therefore,
the approximation error is indeed caused by the ability of the piecewise
constant estimator to approximate $m$. Moreover, note that the expectation
of the approximation error is the bias of the estimator.
%
Suppose that there
exists at least one observation in $A_{\Delta}(x)$. Then we use that $m$
is $\alpha$-H\"older continuous for $\alpha\in(0,1]$ with H\"older constant
$C_{H}$ to obtain uniformly in $x\in\mathcal{X}$ that
\begin{align}
\vert\hat{m}^{(m)}(x)-m(x)\vert & =\Big\vert\sum_{j=1}^{n}(m(X_{j})-m(x))\frac{\mathbb{I}\{X_{j}\in A_{\Delta}(x)\}}{\sum_{i=1}^{n}\mathbb{I}\{X_{i}\in A_{\Delta}(x)\}}\Big\vert\nonumber \\
 & \leq C_{H}\sum_{j=1}^{n}\Vert X_{j}-x\Vert^{\alpha}\frac{\mathbb{I}\{X_{j}\in A_{\Delta}(x)\}}{\sum_{i=1}^{n}\mathbb{I}\{X_{i}\in A_{\Delta}(x)\}}\nonumber \\
 & \leq C_{H}\diam(A_{\Delta}(x))^{\alpha}=C_HC_p^\alpha\Delta^{-\alpha/p}, \label{eq:approx error bound histo}
\end{align}
where we used $\diam(A_{\Delta}(x))\le C_p\Delta^{-1/p}$ by Assumption~\ref{ass:part}. In particular, the approximation error profits from a finer partition.

To handle the stochastic error, we can replace the denominator in $\hat m^{(\varepsilon)}(x)$ by its expectation, such that
\begin{equation}\label{tilde-m}
\tilde{m}^{(\varepsilon)}(x)=\frac{1}{n}\sum_{j=1}^{n}\varepsilon_{j}\frac{\mathbb{I}\{X_{j}\in A_{\Delta}(x)\}}{p_\Delta(x)}
\end{equation}
becomes the asymptotically leading term of the stochastic error $\hat{m}^{(\varepsilon)}(x)$.
Handling the remainder term $\hat m^{(\varepsilon)}(x)-\tilde m^{(\varepsilon)}(x)$ uniformly in $x$ relies on a moment bound for the binomial
distribution that can be found in  Section~\ref{sec:binom}.

The idea for the leading term is that $\tilde{m}^{(\varepsilon)}$
is close to a Gaussian process in $x$ if $n$ is large enough.
We write $\tau_\Delta(x)=p_\Delta(x)^2/s_\Delta(x)$, see \eqref{eq:tau} and \eqref{s_x}.
Note that $p_\Delta(x)$, $\tau_\Delta(x)$ and $s_\Delta(x)$  only depend on $x$ via the cell $A_\Delta(x)$, so they take only finitely many values for $x\in\mathcal X$.   We now calculate the covariance
\begin{align*}
    \cov(\tilde m^{(\varepsilon)}(x_1), \tilde m^{(\varepsilon)}(x_2))
    &= \frac1n \mathbb{E}\Big[ \mathbb{E}\left[ \varepsilon^2|X\right]\frac{\mathbb{I}\{X\in A_{\Delta}(x_1)\}\mathbb{I}\{X\in A_{\Delta}(x_2)\}}{p_\Delta (x_1)p_\Delta (x_2)}\Big]\\
    &= \mathbb{I}\{A_{\Delta}(x_1)=A_{\Delta}(x_2)\}\frac1n \frac{s_\Delta (x_1)}{p_\Delta (x_1)^2}\\
    &= \mathbb{I}\{A_{\Delta}(x_1)=A_{\Delta}(x_2)\}(n\tau_\Delta(x_1))^{-1}.
\end{align*}
To obtain a process with unit variance, we consider
\begin{equation}\label{eq:gaussPr}
  (n\tau_\Delta(x))^{1/2}\tilde m^{(\varepsilon)}(x)= \mathbb{G}_{n}f_{x,\Delta}
\end{equation}
for the empirical process $\mathbb{G}_{n}f=\sqrt{n}( P_nf- Pf)$ based on independent $(X_1,\varepsilon_1),\dots,(X_n,\varepsilon_n)$ with distribution $P$ and empirical distribution $P_n$, indexed by $f\in \mathcal{F}_\Delta$ with
the finite function class
\begin{equation}
\mathcal{F}_{\Delta}:=\big\{f_{x,\Delta}\colon\mathcal X\times\mathbb{R}\to\mathbb{R}\mid f_{x,\Delta}(t,\epsilon)=\epsilon\,s_\Delta(x)^{-1/2}\mathbb{I}\{t\in A_{\Delta}(x)\}, x\in\mathcal X\big\}\label{eq:F delta}.
\end{equation}
We apply the result  by \citet{Chernozhukov2014} formulated in Theorem \ref{thm:sup-approx Cherno} in the Appendix~\ref{appendix}
to approximate
$\sup_{f\in\mathcal{F}_{\Delta}}\vert\mathbb{G}_{n}f\vert$ 
by a sequence of random variables $\mathbf{S}_{\Delta}$ with 
$$\mathbf{S}_{\Delta}\overset{d}{=}\sup_{f\in\mathcal{F}_{\Delta}}\vert B_{\Delta}(f)\vert.$$
Therein, $B_{\Delta}$ is a sequence of centered Gaussian processes
indexed by $\mathcal{F}_{\Delta}$ whose covariance function is given by
\begin{align}\nonumber
\cov(B_{\Delta}(f_{x_{1},\Delta}),B_{\Delta}(f_{x_{2},\Delta})) & =\mathbb{E}\left[f_{x_{1},\Delta}(X_{1},\varepsilon_{1})f_{x_{2},\Delta}(X_{1},\varepsilon_{1})\right]\\
 \label{eq:covariance}
 & =\mathbb{I}\{A_{\Delta}(x_{1})=A_{\Delta}(x_{2})\}.
\end{align}
In particular, this implies $\var(B_{\Delta}(f_{x,\Delta}))=1$ and that the process
on the finite function class is in fact a family of independent Gaussian
random variables. The application of Theorem \ref{thm:sup-approx Cherno} thus leads to the following theorem, which is proved in Section
\ref{sec:Proofs}.

\begin{thm}
\label{thm:sup m tilde approx} Grant Assumption~\ref{ass:part}  and let $\log \Delta=\mathcal O(\log n)$. 
Assume that  $\mathbb{E}[\vert\varepsilon_{1}\vert^{\nu}]<\infty$ for some   $\nu\in[4,\infty)$ and $\sup_{x\in\mathcal X}\mathbb{E}[|\varepsilon|^3|X=x]<\infty$. Then there exists a sequence of random variables $$\mathbf{S}_{\Delta}\overset{d}{=}\max_{j=1,\ldots,\Delta}\vert Z_{j}\vert$$
with independent standard normally distributed $(Z_{j})_{j\in\mathbb N}$
such that
\[
\bigg\vert\sqrt{n}\sup_{x\in\mathcal X}\vert \tau_\Delta(x)^{1/2}\tilde{m}^{(\varepsilon)}(x)\vert-\mathbf{S}_{\Delta}\bigg\vert=\mathcal{O}_{\mathbb{P}}\left(\frac{(\log n)^{3/2}\Delta^{1/2}}{n^{1/2-1/\nu}}+\frac{(\log n)^{5/4}\Delta^{1/4}}{n^{1/4}}+\frac{(\log n)\Delta^{1/6}}{n^{1/6}}\right).
\]
\end{thm}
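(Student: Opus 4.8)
The plan is to rewrite the left-hand side as the supremum of an empirical process over a finite function class and then to apply the Gaussian approximation of \citet{Chernozhukov2014} quoted as Theorem~\ref{thm:sup-approx Cherno}. By \eqref{eq:gaussPr}, $\sqrt n\,\tau_\delta(x)^{1/2}\tilde m^{(\varepsilon)}(x)=\mathbb G_n f_{x,\delta}$ with $f_{x,\delta}\in\mathcal F_\delta$ from \eqref{eq:F delta}, and since $\tau_\delta$ and $s_\delta$ depend on $x$ only through the cell $A_\delta(x)$, the supremum over $x\in[0,1]^p$ is in fact a maximum over the $\delta^{-p}$ cells. To turn the absolute value into a one-sided supremum I would pass to the symmetrised class $\widetilde{\mathcal F}_\delta:=\mathcal F_\delta\cup(-\mathcal F_\delta)$, so that $\sqrt n\sup_{x\in[0,1]^p}|\tau_\delta(x)^{1/2}\tilde m^{(\varepsilon)}(x)|=\sup_{f\in\widetilde{\mathcal F}_\delta}\mathbb G_n f$; this at most doubles the cardinality and leaves the envelope and all moment characteristics unchanged.

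Next I would read off the quantities that enter Theorem~\ref{thm:sup-approx Cherno}. A measurable envelope of $\widetilde{\mathcal F}_\delta$ is $F(t,\epsilon)=|\epsilon|\,s_\delta(t)^{-1/2}$; from \eqref{cX}, \eqref{csigma} and \eqref{eq:bounds p_x_0} we have $s_\delta(x)=\int_{A_\delta(x)}\sigma^2(t)f_X(t)\,dt\ge c_{\sigma^2}c_X\delta^p$, so $\E[|\varepsilon|^\nu]<\infty$ gives $\|F\|_{P,\nu}\lesssim\delta^{-p/2}$ and hence $\E[\max_{i\le n}F(X_i,\varepsilon_i)^2]^{1/2}\le n^{1/\nu}\|F\|_{P,\nu}\lesssim n^{1/\nu}\delta^{-p/2}$. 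The weak variance is exactly $\sup_{f\in\widetilde{\mathcal F}_\delta}Pf^2=1$, since $Pf_{x,\delta}^2=s_\delta(x)^{-1}\E[\sigma^2(X)\mathbb{I}_{A_\delta(x)}(X)]=1$, while $Pf_{x,\delta}=s_\delta(x)^{-1/2}\E[\mathbb{I}_{A_\delta(x)}(X)\,\E[\varepsilon\mid X]]=0$. Using $\sup_x\E[|\varepsilon|^3\mid X=x]<\infty$ one obtains $\sup_{f\in\widetilde{\mathcal F}_\delta}P|f|^3\lesssim\delta^{-3p/2}\cdot\delta^p=\delta^{-p/2}$, i.e. a third-moment parameter of order $\delta^{-p/6}$. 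Finally $\widetilde{\mathcal F}_\delta$ is finite with at most $2\delta^{-p}$ elements, so $\sup_Q N(\widetilde{\mathcal F}_\delta,e_Q,\epsilon\|F\|_{Q,2})\le 2\delta^{-p}$ for every $\epsilon\in(0,1]$; thus the class is of VC type with bounded exponent and $A\asymp\delta^{-p}$, and by the assumption $\log\delta^{-1}=\mathcal O(\log n)$ the resulting entropy factor is of order $\log n$.

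Feeding $\sigma^2=1$, $\|F\|_{P,\nu}\asymp\delta^{-p/2}$ with maximal envelope term $\asymp n^{1/\nu}\delta^{-p/2}$, $(\sup_f P|f|^3)^{1/3}\asymp\delta^{-p/6}$ and entropy factor $\lesssim\log n$ into Theorem~\ref{thm:sup-approx Cherno} produces, on a possibly enriched probability space, a centered Gaussian process $B_\delta$ on $\widetilde{\mathcal F}_\delta$ with $\cov(B_\delta(f),B_\delta(g))=Pfg$ and $\mathbf S_\delta:=\sup_{f\in\widetilde{\mathcal F}_\delta}B_\delta(f)$ such that $|\sup_{f\in\widetilde{\mathcal F}_\delta}\mathbb G_n f-\mathbf S_\delta|=\mathcal O_{\mathbb P}(R_n)$, where $R_n$ is the sum of the three stated terms: the first from the heavy-tailed ($L^\nu$) envelope, the second from the $n^{-1/4}$ Gaussian-coupling term, the third from the $n^{-1/6}$ third-moment term. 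It remains to identify the law of $\mathbf S_\delta$. Because $Pf_{x,\delta}=0$, the variance of $B_\delta(f)+B_\delta(-f)$ vanishes, so $B_\delta(-f)=-B_\delta(f)$ a.s.\ and $\mathbf S_\delta=\sup_{f\in\mathcal F_\delta}|B_\delta(f)|$. By \eqref{eq:covariance} the restriction of $B_\delta$ to $\mathcal F_\delta$ has covariance $\mathbb{I}\{A_\delta(x_1)=A_\delta(x_2)\}$, so choosing one point $x_j$ in each of the $\delta^{-p}$ cells makes $(B_\delta(f_{x_j,\delta}))_{j=1,\dots,\delta^{-p}}$ a vector of independent standard normals $(Z_j)$, whence $\mathbf S_\delta=\max_{j=1,\dots,\delta^{-p}}|B_\delta(f_{x_j,\delta})|\overset{d}{=}\max_{j=1,\dots,\delta^{-p}}|Z_j|$. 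Together with the identity from the first paragraph this is the claim.

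The main obstacle will be the middle part: since $\varepsilon$ has only $\nu<\infty$ moments the envelope $F$ is unbounded, so one must use the form of Theorem~\ref{thm:sup-approx Cherno} that only requires an $L^\nu$-envelope (equivalently, truncate $\varepsilon$ at a level of order $n^{1/\nu}$, treat the bounded part with the bounded-envelope version and control the discarded part by Markov's inequality from $\E[|\varepsilon|^\nu]<\infty$), and then track carefully how $\delta^{-p/2}$, $n^{1/\nu}$ and the entropy factor $\asymp\log n$ combine in each error term so as to reproduce exactly the stated rate — the precise powers of $\log n$ being a matter of bookkeeping once the free confidence level in that theorem is sent to zero slowly. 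The remaining ingredients — the covariance computations, the covering bound for a finite class, and the reduction $\mathbf S_\delta\overset{d}{=}\max_j|Z_j|$ — are routine.
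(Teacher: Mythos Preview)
Your proposal is correct and follows essentially the same route as the paper: rewrite $\sqrt n\,\tau_\delta(x)^{1/2}\tilde m^{(\varepsilon)}(x)$ as $\mathbb G_n f_{x,\delta}$, symmetrise to handle the absolute value, verify envelope, variance and third-moment bounds of order $\delta^{-p/2}$, $1$, $\delta^{-p/2}$ and a covering number $\lesssim\delta^{-p}$, apply Theorem~\ref{thm:sup-approx Cherno} with $\gamma\asymp(\log n)^{-1}$, and identify the limiting law from the covariance \eqref{eq:covariance}. One small remark: your ``main obstacle'' about the unbounded envelope is not really an obstacle here, since Theorem~\ref{thm:sup-approx Cherno} (Corollary~2.2 of \citet{Chernozhukov2014}) already requires only $\|F\|_{P,\nu}\le b$, so no truncation argument is needed; simply take $b\asymp\delta^{-p/2}$ and the three terms drop out directly.
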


As mentioned above this direct approximation of the supremum avoids to approximate the whole empirical process uniformly by a Gaussian process. As a result we need weaker assumptions compared to more classical confidence bands derivations.

From the covariance calculation above and the assumptions on the covariate density and variance function it follows that the variance of the stochastic error is given by
\[
\var\left(\tilde{m}^{(\varepsilon)}(x)\right)=\frac{1}{n\tau_\Delta(x)}=\mathcal O\Big(\frac\Delta{n}\Big),
\]
i.e.\ the stochastic error profits from a partition into fewer cells as larger cells lead to more $\varepsilon_{j}$
being averaged. Compared to the bound of the approximation error in \eqref{eq:approx error bound histo}, we recover the typical bias-variance trade-off in nonparametric statistics. The assumptions on $\Delta$ in Theorem~\ref{thm:CB histo} lead to the commonly used undersmoothing. That means $\Delta$ has to be chosen such that the stochastic error is slightly larger than the approximation error, allowing the confidence bands to be built based on the distribution of the stochastic error.

\section{Numerical illustration}\label{sec:simus}
To illustrate our theoretical performance in a simulation example, we consider the following model: Let $p=2$, $X\sim U([0,1]^2)$ and the regression function by given by
\[
  m(x)=3\sqrt{\big|0.36-\|x\|^2\big|}-(x_1+x_2),\qquad x=(x_1,x_2)\in[0,1]^2.
\]
Note that $m$ is H\"older regular of order $\alpha\le1/2$ and that $m$ is close the a radial function.
We choose homogeneous Gaussian noise with known noise level $\sigma>0$ and fix the sample size $n=800$. We use histogram estimators based on two different partitions. First, we use a histogram regression estimator $\hat m_\text{square}$ based on a partition into $\Delta=h^{-2}$ squares of edge length $h=1/6$. Second, we consider an estimator $\hat m_\text{circ}$ which relies on a partition based on circular segments. More precisely, we decompose the unit square into 15 circles and split the middle ones into two to four segments. Both partitions consist of $\Delta=36$ cells. For comparison we also calculate a Nadaraya-Watson estimator $\hat m_\text{nw}$ based on a Gaussian kernel. The bandwidth is chosen either as $h/\sqrt{12}$, such that the standard deviation of the kernel corresponds to that of a uniform distribution on a $h\times h$ square, or via the automatic bandwidth choice implemented in the \texttt{R} package \texttt{np}. The true regression function, both histogram estimators as well as the Nadaraya-Watson estimator are illustrated in Figure~\ref{fig:estimates}.

\begin{figure}
    \centering
    \includegraphics[width=0.49\linewidth]{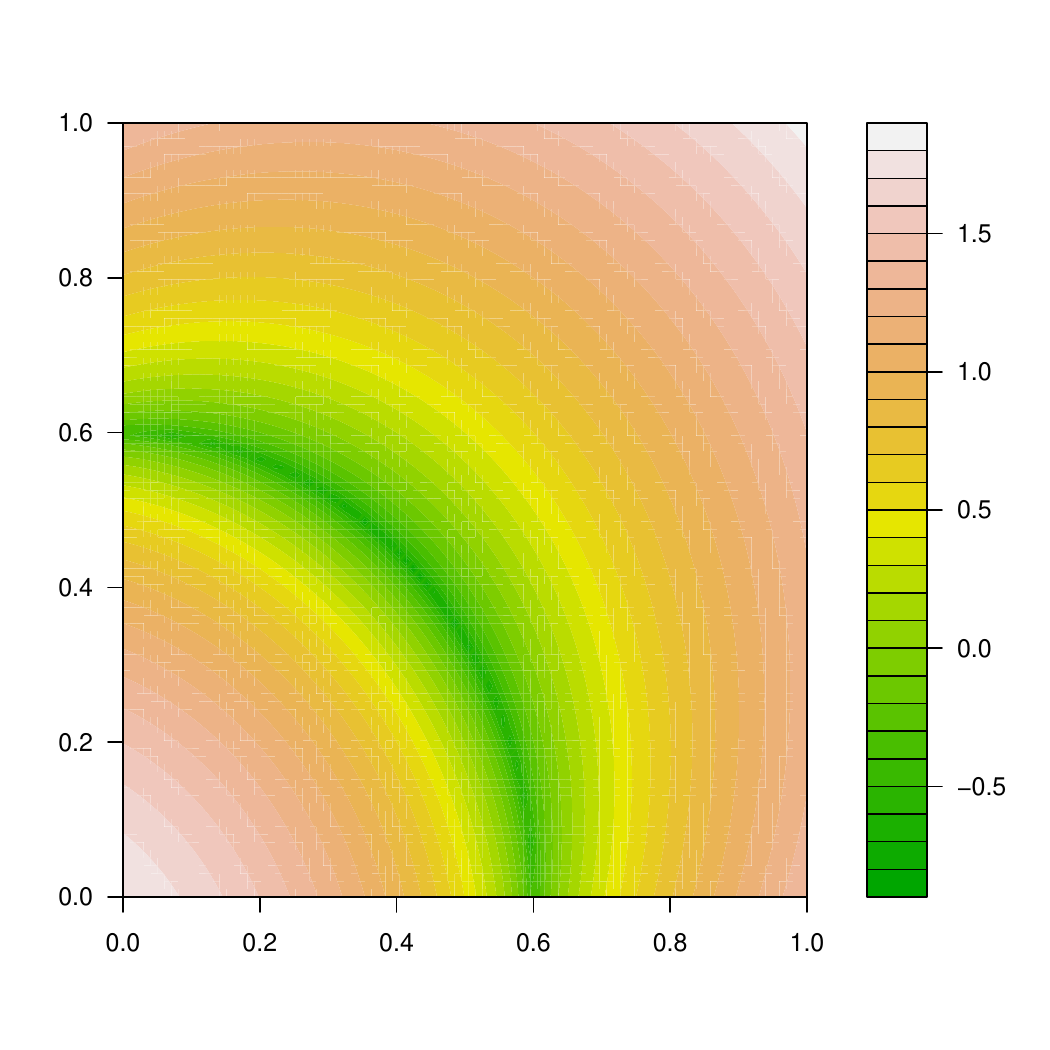}
    \includegraphics[width=0.49\linewidth]{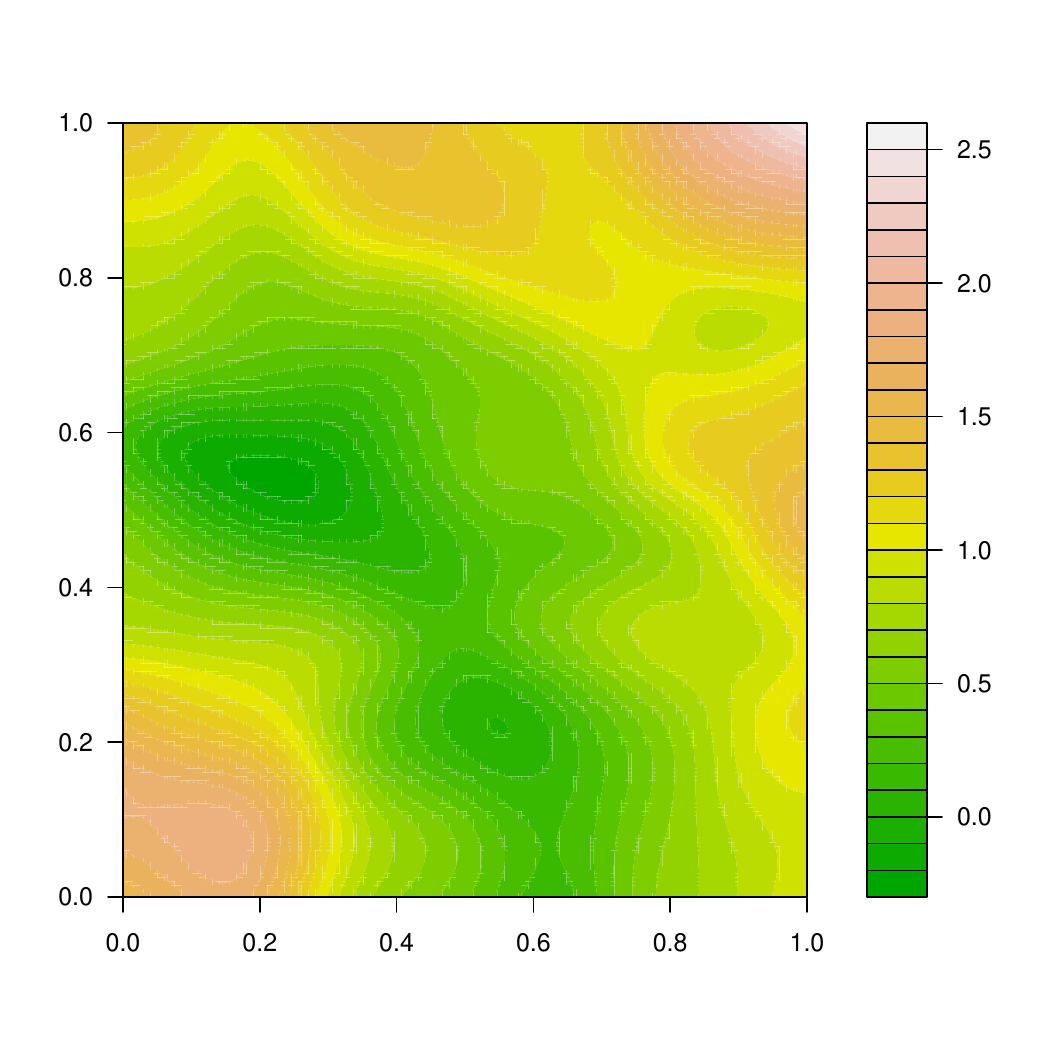}
    \includegraphics[width=0.49\linewidth]{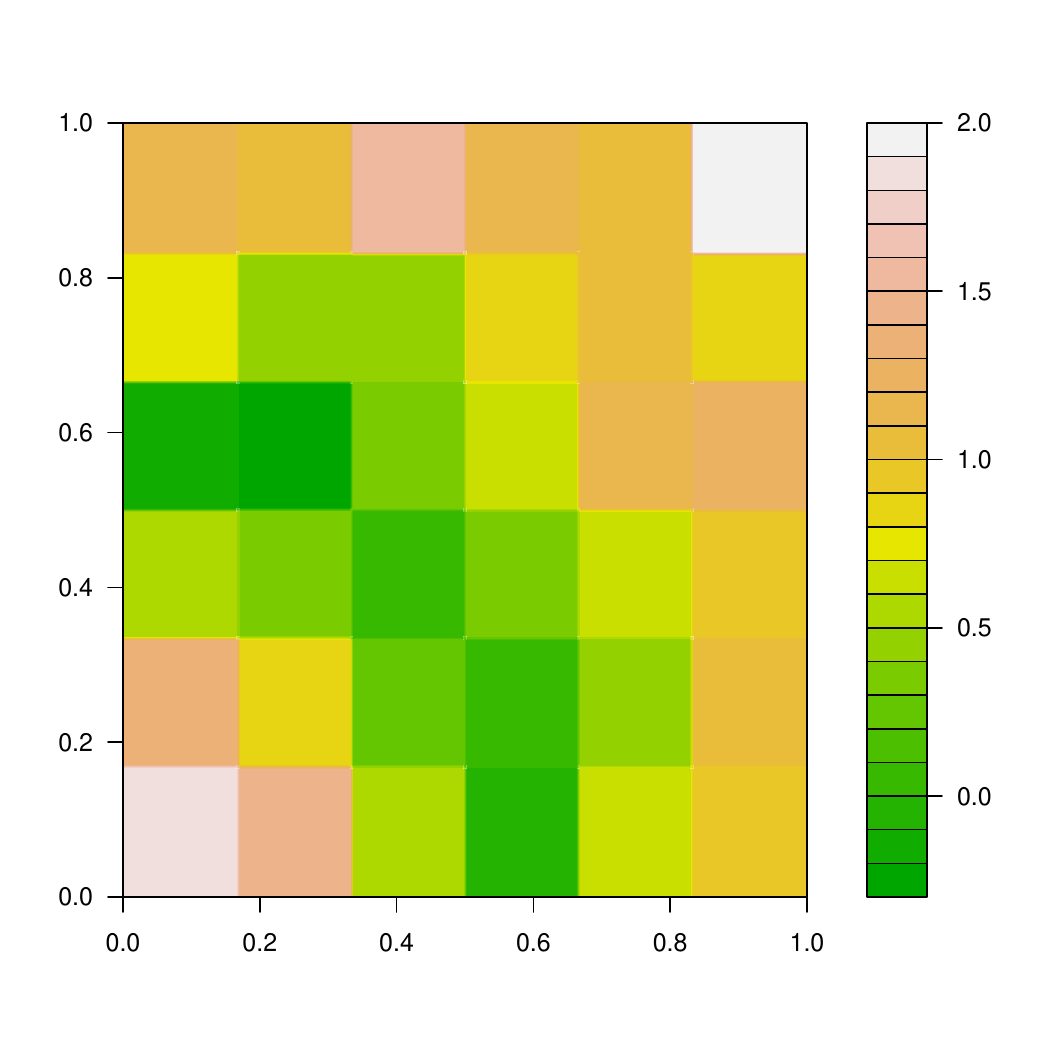}
    \includegraphics[width=0.49\linewidth]{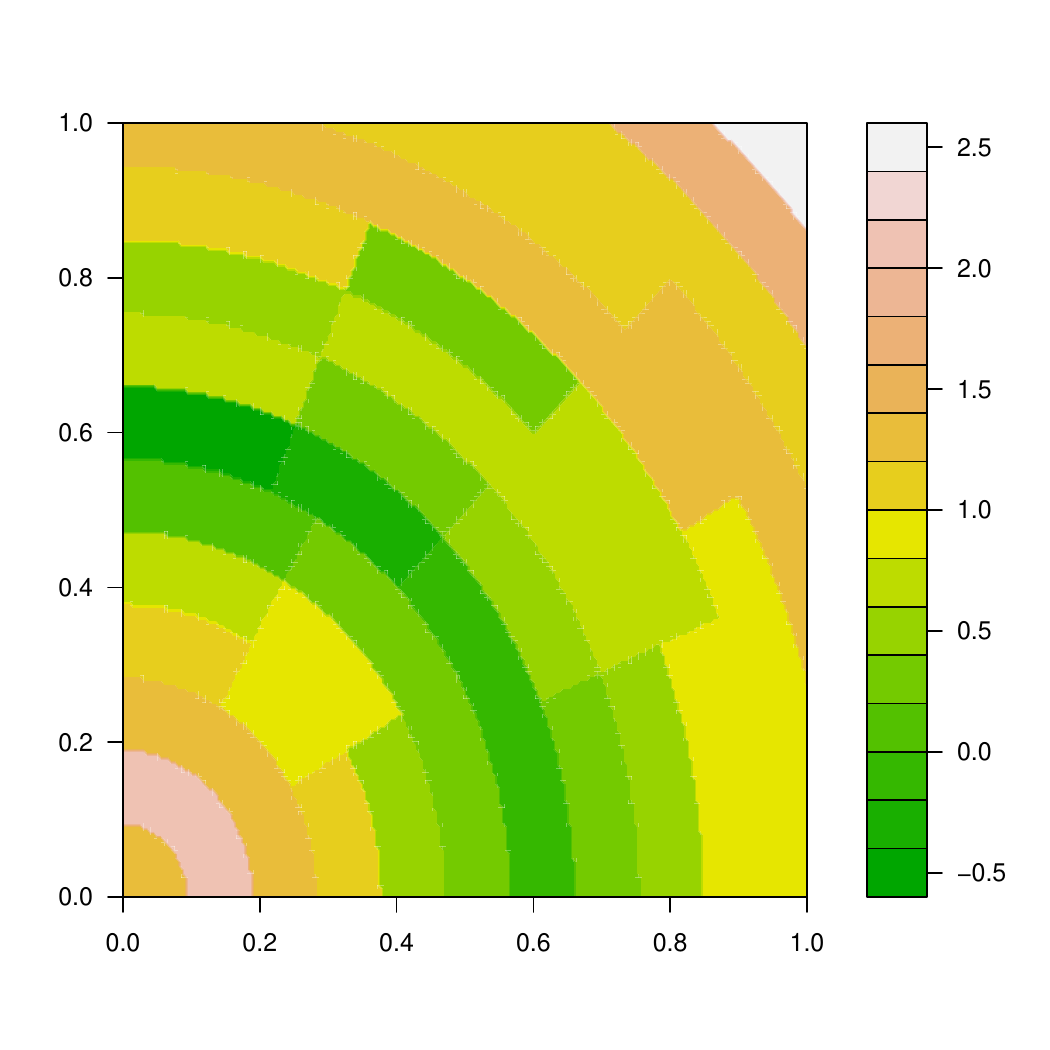}
    \caption{Regression function (top left) together with realizations of the Nadaraya-Watson estimator with automatic bandwidth choice (top right) and the two histogram estimators (bottom row) with two different partitions based on $n=800$ observations with noise level $\sigma=1$.}
    \label{fig:estimates}
\end{figure}

\begin{table}[t]
\centering
\begin{tabular}{ccccccc}
\toprule
\multicolumn{2}{c}{$\sigma$}  & 1 & 2 & 3 & 4 & 5 \\
\midrule
\multicolumn{2}{c}{$R(\hat m_\text{circ})$}& 0.9010 & 1.4501 & 2.0383 & 2.6376 & 3.2394 \\
\multicolumn{2}{c}{$R(\hat m_\text{square})$} & 1.1895 & 1.5800 & 2.0947 & 2.6338 & 3.1805 \\
\multicolumn{2}{c}{$R(\hat m_\text{nw})$ (fixed $h$)} & 1.0076 & 1.8232 & 2.7005 & 3.5900 & 4.4815 \\
\multicolumn{2}{c}{$R(\hat m_\text{nw})$ (adaptive $h$)} & 0.9446 & 1.2055 & 1.4417 & 1.6066 & 1.7530 \\
\midrule
\multirow{2}{*}{CP for $\hat m_\text{circ}$} &$c_\Delta(0.95)$& 0.99 & 1.00 & 1.00 & 1.00 & 1.00 \\
&$\tilde c_\Delta(0.95)$ & 1.00 & 1.00 & 1.00 & 1.00 & 1.00 \\
\multirow{2}{*}{CP for $\hat m_\text{square}$} &$c_\Delta(0.95)$ & 0.98 & 0.99 & 0.99 & 1.00 & 1.00 \\
&$\tilde c_\Delta(0.95)$ & 0.98 & 1.00 & 1.00 & 1.00 & 1.00 \\
\midrule
\multirow{2}{*}{UCP for $\hat m_\text{circ}$} &$c_\Delta(0.95)$& 0.05 & 0.61 & 0.78 & 0.85 & 0.88 \\
&$\tilde c_\Delta(0.95)$ & 0.15 & 0.71 & 0.85 & 0.93 & 0.95 \\
\multirow{2}{*}{UCP for $\hat m_\text{square}$} &$c_\Delta(0.95)$ & 0.00 & 0.15 & 0.49 & 0.58 & 0.61 \\
&$\tilde c_\Delta(0.95)$ & 0.00 & 0.26 & 0.54 & 0.63 & 0.70 \\
\bottomrule
\end{tabular}
\caption{Mean uniform estimation errors $R(\hat m)$ for histogram estimators based on circular and quadratic partitions and mean uniform estimation error of a Nadaraya-Watson estimator; average pointwise coverage probabilities (CP) and uniform coverage probabilities (UCP) of the confidence bands with nominal level $0.95$ based on both histogram estimators with exact and asymptotic critical value, all for different noise levels $\sigma$.}\label{tab:numResults}
\end{table}

We compare the performance of the three estimators in terms of the mean uniform estimation error $R(\hat m)=\E[\sup_{x\in[0,1]^2}|\hat m(x)-m(x)|]$ based on a grid on $[0,1]^2$ with mesh size 0.005 and a Monte Carlo approximation of the expectation with 100 Monte Carlo iterations. Moreover, we calculate the uniform empirical coverage probabilities of the confidence bands with nominal confidence level $0.95$ based on the two histogram estimators and using the critical values $c_\Delta$ and $\tilde c_\Delta$ from \eqref{eq:c} and \eqref{eq:cTilde}, respectively. As an additional feedback on the coverage properties of the confidence bands, we also report the average pointwise coverage probabilities over the spatial grid. Since the radius of the confidence bands based on $\tilde c_\Delta$ are by the factor $\frac{\tilde c_{16}(0.95)}{c_{16}(0.95)}\approx 1.0453$ larger than those based on $c_\Delta$, the coverage probabilities of the latter are throughout smaller. To see the dependence of the signal-to-noise ratio we vary the noise level $\sigma\in\{1,2,\dots,5\}$. All results are summarized in Table~\ref{tab:numResults}. We see that for small noise levels the performance of the histogram estimators is comparable to that of the Nadaraya-Watson estimator, while the coverage of the confidence bands is poor. By increasing the noise level, we enter an under-smoothing regime such that the estimation error becomes dominated by the stochastic error and the coverage considerably improves. Naturally, the estimation error grows with $\sigma$. The increasing deviation to the (adaptive) Nadaraya-Watson estimator emphasizes that the partitions are coarse in these cases.

The partition of $\hat m_{\text{circ}}$ is better adapted to the geometry of the regression function. As a consequence, the coverage probabilities for $\hat m_{\text{circ}}$ are considerably better than for $\hat m_{\text{square}}$ in all cases. Moreover, for noise levels $\sigma=1,2,3$ the estimation error of  $\hat m_{\text{circ}}$ is smaller, while for $\sigma=4,5$ the estimation error of $\hat m_{\text{square}}$ is only slightly better.

\section{Conclusion}\label{sec:conclusion}
Histogram regression estimators are used a lot in applications and serve as a building block for more advanced statistical methods such as random forests. Aiming for uniform statistical inference, we construct histogram based confidence bands which allow for a general choice of partitions. This flexibility can improve the finite sample behaviour of the histogram estimator depending on the data structure. 
\\
Our confidence band is applicable for possibly irregular nonparametric regression functions with H\"older-regularity only of order $\alpha\in (0,1]$. The authors are not aware of any uniform confidence band for non-differentiable regression functions in the literature. 
These mild model assumptions are possible due to a proof structure, which deviates from more classical proof methods and which could also be transferred to other methods and models.
For instance, the current preprint \cite{neumeyer2025asymptotic} demonstrates that a similar proof strategy can be used to derive confidence bands for centered purely random forests. 
Some ideas to obtain confidence bands for other nonparametric regression estimators are discussed in Appendix \ref{sec:extension} for different choices of cells including the case of a Nadaraya-Watson estimator.

\section{Proofs\label{sec:Proofs}}

Subsequently we gather all the proofs starting with our main result Theorem~\ref{thm:CB histo}. We will write $A\lesssim B$ equivalently for $A=\mathcal O(B)$. Moreover, we set $K=\sup_{x\in\mathcal{X}}\mathbb{E}[|\varepsilon|^3|X=x]<\infty$, and use the notation $\Vert \cdot\Vert_{\infty}$ for the supremum norm on $\mathcal{X}$.

\subsection{Proof of Theorem \ref{thm:CB histo} and its corollaries}

\begin{proof}[Proof of Theorem \ref{thm:CB histo}]
 We use the decomposition of the estimator and error from (\ref{eq:error decomp histo}) and the definition of $\tilde m^{(\varepsilon)}$ in (\ref{tilde-m}). For the sequence of random variables $\mathbf{S}_{\Delta}$
as defined in Theorem~\ref{thm:sup m tilde approx} we obtain by the reverse triangle inequality that
\begin{align}\nonumber
\bigg\vert\sqrt{n}  \sup_{x\in\mathcal X}&\tau_\Delta(x)^{1/2}\vert\hat{m}(x)-m(x)\vert-\mathbf{S}_{\Delta}\bigg\vert
\leq  T_1 + T_2 + T_3,\qquad \text{with}\\
 T_1& =
 \sqrt{n}\sup_{x\in\mathcal X}\tau_\Delta(x)^{1/2}\big\vert\hat{m}^{(m)}(x)-m(x)\big\vert, \label{approxterm}\\
 T_2&=\bigg\vert\sqrt{n}\sup_{x\in\mathcal X}\vert \tau_\Delta(x)^{1/2}\tilde{m}^{(\varepsilon)}(x)\vert-\mathbf{S}_{\Delta}\bigg\vert,\label{approx-leadingterm}\\
 T_3&=\sqrt{n}\sup_{x\in\mathcal X}\tau_\Delta(x)^{1/2}\big\vert\tilde{m}^{(\varepsilon)}(x)-\hat{m}^{(\varepsilon)}(x)\vert.\label{msigmaremainder}
\end{align}
Subsequently, we will verify that all three terms are $o_{\mathbb{P}}((\log n)^{-1})$.

For $T_1$ from \eqref{approxterm} note that the approximation error satisfies
\begin{align}
\hat{m}^{(m)}(x)-m(x) & =\sum_{j=1}^{n}(m(X_{j})-m(x))\frac{\mathbb{I}\{X_{j}\in A_{\Delta}(x)\}}{\sum_{i=1}^{n}\mathbb{I}\{X_{i}\in A_{\Delta}(x)\}}\nonumber \\
 & \qquad+m(x)(\mathbb{I}\{\exists j\in[n]:X_{j}\in A_{\Delta}(x)\}-1).\label{eq:prf histo approx err decomp}
\end{align}
Using \eqref{eq:approx error bound histo}, we get
\begin{align}
\mathbb{E}\left[\sup_{x\in\mathcal X}\Big\vert\sum_{j=1}^{n}(m(X_{j})-m(x))\frac{\mathbb{I}\{X_{j}\in A_{\Delta}(x)\}}{\sum_{i=1}^{n}\mathbb{I}\{X_{i}\in A_{\Delta}(x)\}}\Big\vert\right] & \leq C_{H}\mathbb{E}\left[\sup_{x\in\mathcal X}\diam(A_{\Delta}(x))^{\alpha}\right]\nonumber \\
 & =C_{H}C_p^\alpha\Delta^{-\alpha/p}.\label{eq:prf histo approx err part 1 bound}
\end{align}
Moreover, we obtain by \eqref{eq:bounds p_x_0}
\begin{align*}
\mathbb{E} & \left[\sup_{x\in\mathcal X}\vert m(x)(\mathbb{I}\{\exists j\in[n]:X_{j}\in A_{\Delta}(x)\}-1)\vert\right]\\
 & \leq\Vert m\Vert_{\infty}\mathbb{E}\left[\sup_{x\in\mathcal X}\mathbb{I}\{\nexists j\in[n]:X_{j}\in A_{\Delta}(x)\}\right]\\
 & \leq\Vert m\Vert_{\infty}\Delta\sup_{x\in\mathcal X}\mathbb{E}\big[\mathbb{I}\{\nexists j\in[n]:X_{j}\in A_{\Delta}(x)\}\big]\\
 & =\Vert m\Vert_{\infty}\Delta\sup_{x\in\mathcal X}(1-p_\Delta(x))^{n}
 \leq\Vert m\Vert_{\infty}\Delta(1-c_{X}{c_p\Delta^{-1}})^{n}.
\end{align*}
To handle (\ref{approxterm}), note that $n^{1/2}\sup_{x}|\tau_\Delta(x)|^{1/2}\le (C_XC_p/c_{\sigma^2})^{1/2}(n/\Delta)^{1/2}$ by (\ref{eq:bounds tau_x}). Together with (\ref{eq:prf histo approx err decomp}) and (\ref{eq:prf histo approx err part 1 bound}) we obtain for all $\kappa>0$
\begin{align*}
\mathbb P\left(T_1\ge \frac{\kappa}{\log n}\right)&=\mathbb{P}  \left(\Vert m-\hat{m}^{(m)}\Vert_{\infty}\geq\kappa\frac{(c_{\sigma^2}/(C_X)C_p)^{1/2}\Delta^{1/2}}{n^{1/2}\log n}\right)\\
 & \lesssim\mathbb{E}\big[\Vert m-\hat{m}^{(m)}\Vert_{\infty}\big]\kappa^{-1}(n/\Delta)^{1/2}\log n\\
 & \lesssim{(\Delta^{-2\alpha/p}n/\Delta)^{1/2}\log n+\Delta(1-c_{X}c_p\Delta^{-1})^{n}(n/\Delta)^{1/2}\log n}.
\end{align*}
The first term is $o(1)$ due to (\ref{eq:assum histo approx}) and
for the second term we have
\[
\Delta(1-c_{X}c_p\Delta^{-1})^{n}(n/\Delta)^{1/2}\log n\lesssim\exp(-c_{X}c_pn/\Delta)\Delta(n/\Delta)^{1/2}\log n\to0
\]
because $\Delta\ge n^{1/(2\alpha/p+1)}$ by \eqref{eq:assum histo approx} and $n/\Delta\ge n^{2\nu}$ by \eqref{eq:assum histo GP approx}. Therefore, $T_1=o_{\mathbb{P}}((\log n)^{-1})$.

For the term $T_2$ from (\ref{approx-leadingterm})  Theorem \ref{thm:sup m tilde approx} yields
\begin{align}
T_2&=\bigg\vert\sqrt{n}\sup_{x\in\mathcal X}\vert \tau_\Delta(x)^{1/2}\tilde{m}^{(\varepsilon)}(x)\vert-\mathbf{S}_{\Delta}\bigg\vert\notag\\
&=\mathcal{O}_{\mathbb{P}}\left(\frac{(\log n)^{3/2}\Delta^{1/2}}{n^{1/2-1/\nu}}+\frac{(\log n)^{5/4}\Delta^{1/4}}{n^{1/4}}+\frac{(\log n)\Delta^{1/6}}{n^{1/6}}\right)\label{eq:prf histo GP approx err}
\end{align}
because $\nu\geq4$. 
To see that the right-hand side is indeed $o_{\mathbb{P}}((\log n)^{-1})$, note that assumption (\ref{eq:assum histo GP approx})
implies
\[
\frac{(\log n)^{5/2}\Delta^{1/2}}{n^{1/2-1/\nu}}=\Big(\frac{(\log n)^{5}\Delta}{n}n^{2/\nu}\Big)^{1/2}\to0.
\]
The same argument applied to the other terms in (\ref{eq:prf histo GP approx err})
yields $T_2=o_{\mathbb{P}}((\log n)^{-1})$. 

Next we consider the remainder term $T_3$ from (\ref{msigmaremainder}). By the definition of $\tilde m^{(\varepsilon)}$ in  (\ref{tilde-m}) and $\hat p_\Delta(x)$ in (\ref{hatpx}) we obtain
$$ \hat m^{(\varepsilon)}(x)-\tilde m^{(\varepsilon)}(x)
= \tilde m^{(\varepsilon)}(x)\left(\frac{p_\Delta(x)}{\hat p_\Delta(x)} \mathbb{I}\{\hat p_\Delta(x)>0\}-1\right).$$
Together with \eqref{Rate-S} and $T_2=o_{\mathbb{P}}((\log n)^{-1})$, we conclude
\begin{eqnarray*} 
T_3&\le&\left( \mathbf{S}_\Delta + o_{\mathbb{P}}((\log n)^{-1})\right)\sup_{x\in \mathcal X}\left| \frac{p_\Delta(x)}{\hat p_\Delta(x)} \mathbb{I}\{\hat p_\Delta(x)>0\}-1\right|\\
&=& \mathcal{O}_{\mathbb{P}}(\sqrt{\log n})\sup_{x\in \mathcal X}\left| np_\Delta(x)\left(\frac{1}{n\hat p_\Delta(x)}-\frac{1}{np_\Delta(x)}\right) \mathbb{I}\{\hat p_\Delta(x)>0\}-\mathbb{I}\{\hat p_\Delta(x)=0\}\right|.
\end{eqnarray*}
To obtain the remainder rate $o_{\mathbb{P}}((\log n)^{-1})$, the second indicator can be written as $-(\mathbb{I}\{\exists j\in[n]:X_{j}\in A_{\Delta}(x)\}-1)$ and be handled as in (\ref{eq:prf histo approx err decomp}). For the other term note that for fixed $x$ the random variable $B_n=n\hat p_\Delta(x)$ is binomially distributed with parameters $n$ and  $p_n=p_\Delta(x)$, and we can apply Lemma~\ref{lem:moments binomial fractures}. 
Together with (\ref{eq:bounds p_x_0}), the union bound and Markov's inequality we obtain
\begin{align}\nonumber
    &\mathbb{P}\left( \sup_{x\in \mathcal X}\left| np_\Delta(x)\left(\frac{1}{n\hat p_\Delta(x)}-\frac{1}{np_\Delta(x)}\right)\right| \mathbb{I}\{\hat p_\Delta(x)>0\}>\frac{\kappa}{(\log n)^{3/2}}\right)
    \\
    &\leq \Delta \frac{(\log n)^{3q/2}}{\kappa^q} (nC_XC_p/\Delta)^q\mathcal{O}((n/\Delta)^{-3q/2})
    =\mathcal{O}\left(\Delta\frac{(\log n)^{3q/2}}{(n/\Delta)^{q/2}}\right) =o(1)
    \label{1/hatp}
 \end{align}
 which holds by assumption (\ref{eq:assum histo GP approx}) for $q\in 2\mathbb{N}$, $q\geq \nu$.  Hence, $T_3=o_{\mathbb{P}}((\log n)^{-1})$.

Altogether we conclude
\begin{equation}\label{repr}
\sqrt{n}  \sup_{x\in\mathcal X}\tau_\Delta(x)^{1/2}\vert\hat{m}(x)-m(x)\vert=\mathbf{S}_{\Delta}+R_n
\end{equation}
for a remainder term $R_n=o_{\mathbb{P}}((\log n)^{-1})$. Thus, the uniform coverage probability can be bounded from below by
\begin{align}
\mathbb{P} & \big(m(x)\in\mathcal{C}_{n}(x),\,\forall x\in\mathcal X\big)\nonumber \\
 & =\mathbb{P}\Big(\sqrt{n}  \sup_{x\in\mathcal X}\tau_\Delta(x)^{1/2}\vert\hat{m}(x)-m(x)\vert\leq c_{\Delta}(\beta)\Big)\nonumber \\
 & =\mathbb{P}\big(\mathbf{S}_{\Delta}+R_n\leq c_{\Delta}(\beta)\big)\label{Restterm} \\
 & \geq \mathbb{P}\big(\mathbf{S}_{\Delta}\leq c_{\Delta}(\beta)\big)- \mathbb{P}\big(c_{\Delta}(\beta)-\kappa_n<\mathbf{S}_{\Delta}\leq c_{\Delta}(\beta)\big)-\mathbb{P}\left(R_n>\kappa_n\right), \nonumber
 \\
& \geq 1-\beta- \mathbb{P}\left(|\mathbf{S}_{\Delta}-c_{\Delta}(\beta)|\leq \kappa_n\right)-\mathbb{P}\left(R_n>\kappa_n\right), \nonumber
\end{align}
where the last term converges to zero for $\kappa_n=(\log n)^{-1}$
and the probability in the middle term can be upper bounded by 
\begin{align}
\sup_{\xi\in\mathbb{R}}\mathbb{P}\left(\vert\mathbf{S}_{\Delta}-\xi\vert\leq\kappa_n\right) & \leq4\kappa_n\left(\mathbb{E}\left[\sup_{f\in\mathcal{F}_{\Delta}}\vert B_{\Delta}(f)\vert\right]+1\right)\nonumber \\
 & =4\kappa_n\left(\mathbb{E}\left[\max_{j=1,\ldots,\Delta}\vert Z_{j}\vert\right]+1\right)\nonumber \\
 & =\mathcal{O}(\kappa_n\sqrt{\log n}) =o(1)\nonumber
\end{align}
which follows from 
Corollary 2.1 by \citet{Chernozhukov2014a} and (\ref{Rate-S}).
Thus we obtain
\[
\liminf_{n\to\infty}\inf_{m\in\mathcal{H}^\alpha(C_{H})}\mathbb{P}\left(m(x)\in\mathcal{C}_{n}(x),\,\forall x\in\mathcal X\right)\geq1-\beta.\qedhere
\]
\end{proof}

\smallskip

\begin{proof}[Proof of Corollary \ref{limit-distribution}]
As in  (\ref{repr}) and (\ref{Restterm}) we obtain 
\begin{eqnarray*}
\mathbb{P} \big(m(x)\in\tilde{\mathcal{C}}_{n}(x),\,\forall x\in\mathcal{X}\big)&=&\mathbb{P}\big(\mathbf{S}_{\Delta}+R_n\leq \tilde{c}_{\Delta}(\beta)\big)\\
&=&\mathbb{P}\big(a_\Delta(\mathbf{S}_{\Delta}-b_\Delta)+a_\Delta R_n\leq -\log(-\log(1-\beta))\big)\\
&=&\mathbb{P}\big(a_\Delta(\mathbf{S}_{\Delta}-b_\Delta)\leq -\log(-\log(1-\beta))\big)+o(1).
\end{eqnarray*}
The last equality follows from Slutsky's lemma because $R_n=o_{\mathbb{P}}((\log n)^{-1})$, and from assumption (\ref{eq:assum histo GP approx})  it follows that $a_\Delta\lesssim(\log n)^{1/2}$. 
It remains to verify that the limit distribution of $a_\Delta(\mathbf{S}_{\Delta}-b_\Delta)\overset{d}{=} a_\Delta(\max_{j=1,\ldots,\Delta}\vert Z_{j}|-b_\Delta)$ is the Gumbel distribution with cdf $\exp(-e^{-x})$. Indeed, using the the asymptotic independence of the maximum and minimum \cite[Theorem 1.8.2]{leadbetter2012extremes} together with the extreme value limit of the Gaussian distribution \cite[Theorem 1.5.3]{leadbetter2012extremes}  we have
\begin{align*}
\P(a_{\Delta}(\max_j|Z_{j}|-b'_{\Delta}&)\le x)  =\P\Big(\max_j Z_{j}\le\frac{x}{a_{\Delta}}+b'_{\Delta},\min_j Z_{j}\ge-\frac{x}{a_{\Delta}}-b'_{\Delta}\Big)\\
 & =\P\Big(\max_j Z_{j}\le\frac{x}{a_{\Delta}}+b'_{\Delta}\Big)\P\Big(\min_j Z_{j}\ge-\frac{x}{a_{\Delta}}-b'_{\Delta}\Big)+o(1)\\
 & =\P\Big(a_{\Delta}(\max_j Z_{j}-b'_{\Delta})\le x\Big)\P\Big(-\max_j(-Z_{j})\ge-\frac{x}{a_{\Delta}}-b'_{\Delta}\Big)+o(1)\\
 & =\P\Big(a_{\Delta}(\max_j Z_{j}-b'_{\Delta})\le x\Big)^{2}+o(1)
  \to e^{-2e^{-x}}
\end{align*}
for $a_{\Delta}=\sqrt{2\log\Delta}$ and $b'_{\Delta}=\sqrt{2\log\Delta}-\frac{\log\log n+\log4\pi}{2\sqrt{2\log\Delta}}$.
Hence, $b_{\Delta}=b'_{\Delta}+a_{\Delta}^{-1}\log2$
yields
$\P(a_{\Delta}(\max|Z_{j}|-b_{\Delta})\le x)\to e^{-e^{-x}}$.
\end{proof}

\begin{proof}[Proof of Corollary \ref{cor-rate}]
To verify the uniform rate, we first note that assumption~\eqref{eq:assum histo approx} was only required to ensure that the approximation error $T_1$ from \eqref{approxterm} is of order $o_{\P}((\log n)^{-1})$. The estimates following \eqref{eq:prf histo approx err decomp} and \eqref{eq:bounds tau_x} show that the approximation error satisfies
\begin{align*}
  \E\big[\sup_{x\in\mathcal X}|\hat m^{(m)}(x)-m(x)|\big]
  \lesssim n^{-1/2}\Delta^{1/2}\E[T_1] 
  \lesssim \Delta^{-\alpha/p} +\Delta(1-c_Xc_p/\Delta)^n,
\end{align*}
where the second term is negligible if $\log(\Delta)=o(n/\Delta)$. The latter condition is implied by assumption~(\ref{eq:assum histo GP approx}). 
Together with $T_2+T_3=o_{\P}((\log n)^{-1})$, (\ref{Rate-S}) and (\ref{eq:bounds tau_x}), we conclude $\sup_{x\in\mathcal X}|\hat m(x)-m(x)|=\mathcal{O}_{\mathbb{P}}(\Delta^{-\alpha/p}+\sqrt{(\log n)\Delta/n})$. Choosing $\Delta=(n/\log n)^{1/(2\alpha/p +1)}$ balances both terms and yields the optimal rate of convergence.
\end{proof}

\smallskip

\begin{proof}[Proof of Corollary \ref{cor-estimated-px}]
As in the proof of Theorem \ref{thm:CB histo} we need to show $R_n=o_\mathbb{P}((\log n)^{-1})$ for the additional remainder term 
\begin{align*}
    R_n &= \bigg\vert\sqrt{n}  \sup_{x\in\mathcal X}\tau_\Delta(x)^{1/2}\vert\hat{m}(x)-m(x)\vert-\sqrt{n}  \sup_{x\in\mathcal X}\hat \tau_\Delta(x)^{1/2}\vert\hat{m}(x)-m(x)\vert\bigg\vert\\
    &\leq  \sqrt{n}  \sup_{x\in\mathcal X}\tau_\Delta(x)^{1/2}\vert\hat{m}(x)-m(x)\vert \sup_{x\in\mathcal X}\frac{|\hat \tau_\Delta(x)^{1/2}-\tau_\Delta(x)^{1/2}|}{\tau_\Delta(x)^{1/2}}\\
    &= \big(\mathbf{S}_\Delta + o_\mathbb{P}((\log n)^{-1})\big)\sup_{x\in\mathcal X}\frac{|\hat \tau_\Delta(x)-\tau_\Delta(x)|}{\tau_\Delta(x)^{1/2}(\tau_\Delta(x)^{1/2}+\hat \tau_\Delta(x)^{1/2})}\\
    &\leq \mathcal{O}_\mathbb{P}((\log n)^{1/2}) \sup_{x\in\mathcal X}\Big\vert \frac{\hat \tau_\Delta(x)}{\tau_\Delta(x)}- 1\Big\vert= o_\mathbb{P}((\log n)^{-1})
\end{align*}
by (\ref{repr}), (\ref{eq:dist S_delta}), (\ref{Rate-S}) and the assumption on $\hat\tau_\Delta$. 
\end{proof}

\subsection{Proof of Theorem \ref{thm:sup m tilde approx}}\label{subsec-auxiliary}

\begin{proof}[Proof of Theorem \ref{thm:sup m tilde approx}]
We will apply the result by \citet[Corollary 2.2]{Chernozhukov2014} formulated in Theorem \ref{thm:sup-approx Cherno} in the appendix to fix the notation. While
the function class in this result does not depend on $n$ and $\Delta$, the theorem can be applied to $\mathcal{F}_{\Delta}$ for every $n$
or $\Delta$, respectively, due to the universality of the occuring constants. As a consequence we get a sequence $\mathbf{S}_{\Delta}$ of approximating random variables as pointed out by \citet[Remark 2.1]{Chernozhukov2014}. We consider the function class $\mathcal{F}_{\Delta}$
from (\ref{eq:F delta}). To obtain the claimed result for the absolute value, we need to consider $\mathcal{F}_{\Delta}\cup-\mathcal{F}_{\Delta}$. Due to \citet[Corollary 2.1]{Chernozhukov2014}
we consider $\mathcal{F}_{\Delta}$ without loss of generality.

The functions $f_{x,\Delta}$ are measurable and thus the function
class is pointwise measurable. To see that $\mathcal{F}_{\Delta}$
is a VC-type class as defined in the appendix, we calculate a measurable envelope $F$:
\begin{align}
\sup_{x\in\mathcal X}\vert f_{x,\Delta}(t,\epsilon)\vert & =\sup_{x\in\mathcal X}\vert \epsilon\vert s_\Delta(x)^{-1/2}\mathbb{I}\{t\in A_{\Delta}(x)\}\nonumber \\
 & \leq\vert \epsilon\vert (c_{X}c_{\sigma^2}c_p)^{-1/2}\Delta^{1/2} =:F(t,\epsilon)\label{eq:envelope cherno}
\end{align}
applying $c_{\sigma^2}c_Xc_p\Delta^{-1}\leq s_\Delta(x)\leq C_{\sigma^2}C_XC_p\Delta^{-1}$ 
from \eqref{s_x}, \eqref{cX}, \eqref{csigma} and Assumption~\ref{ass:part}. The finite size $\vert\mathcal{F}_{\Delta}\vert=\Delta$
is an upper bound for any covering number of $\mathcal{F}_{\Delta}$. Hence,
\begin{equation}\label{vc-type}
    \sup_{Q\in\mathcal{Q}}N(\mathcal{F}_{\Delta},\Vert\cdot\Vert_{Q,2},\kappa\Vert F\Vert_{Q,2})\leq\Delta\leq\Delta/\kappa
\end{equation}
for all $\kappa\in(0,1]$, $\mathcal{F}_{\Delta}$ is indeed of VC-type and we can choose  $A=\Delta$ and $v=1$ in Theorem~\ref{thm:sup-approx Cherno}.

We proceed by verifying the conditions on the moments. Since
\[
\sup_{f\in\mathcal{F}_{\Delta}}P\vert f\vert^{2}=\sup_{x\in\mathcal X}  s_\Delta(x)^{-1}\mathbb{E}\left[\varepsilon_{1}^{2}\mathbb{I}\{X_{1}\in A_{\Delta}(x)\}\right]=1,
\]
the quantity $\tilde{\sigma}$ from Theorem \ref{thm:sup-approx Cherno} is equal to $1$.
Moreover,
\begin{align*}
\sup_{f\in\mathcal{F}_{\Delta}}P\vert f\vert^{3} & =\sup_{x\in\mathcal X} s_\Delta(x)^{-3/2}\mathbb{E}\left[\vert\varepsilon_{1}\mathbb{I}\{X_{1}\in A_{\Delta}(x)\}\vert^{3}\right]\\
 & \leq K(c_{\sigma^2}c_{X}c_p)^{-3/2}C_XC_p\Delta^{1/2}.
\end{align*}
For the envelope $F$ from (\ref{eq:envelope cherno}) we obtain
\[
\Vert F\Vert_{P,\nu}=\mathbb{E}\left[\vert F(X_{1},\varepsilon_{1})\vert^{\nu}\right]^{1/\nu}=(c_{X}c_{\sigma^2}c_p)^{-1/2}\Delta^{1/2}\mathbb{E}\left[\vert\varepsilon_{1}\vert^{\nu}\right]^{1/\nu}.
\]
We thus choose $b$ from Theorem \ref{thm:sup-approx Cherno} as
\[
b:=\Delta^{1/2}C_b,\quad C_{b}:=\max\left\{ K(c_Xc_{\sigma^2}C_p)^{-3/2}C_XC_p,\mathbb{E}\left[\vert\varepsilon_{1}\vert^{\nu}\right]^{1/\nu}(c_Xc_{\sigma^2}C_p)^{-1/2}\right\} .
\]
Since $b$ is of order $\Delta^{1/2}$, we have $b\geq1$ for $\Delta$ large enough. To apply Theorem \ref{thm:sup-approx Cherno} we obtain
\begin{align*}
\Gamma_{n} & =cv(\log n\vee\log(Ab/\tilde{\sigma}))
 =c(\log n\vee\log(\Delta^{3/2}C_{b}))\\
 & \lesssim \log n\vee p\log(\Delta) \lesssim\log n.
\end{align*}
Let $B_{\Delta}$ be the centered Gaussian process defined in the
claim. For $\gamma=(\log n)^{-1}$ Theorem \ref{thm:sup-approx Cherno}
yields that there exists a random variable
\[
\mathbf{S}_{\Delta}\overset{d}{=}\sup_{f\in\mathcal{F}_{\Delta}\cup-\mathcal{F}_{\Delta}}B_{\Delta}(f)=\sup_{f\in\mathcal{F}_{\Delta}}\vert B_{\Delta}(f)\vert=\sup_{x\in\mathcal X}\vert B_{\Delta}(f_{x,\Delta})\vert
\]
such that
\begin{align*}
\mathbb{P} & \Big(\Big\vert\sup_{f\in\mathcal{F}_{\Delta}}\vert\mathbb{G}_{n}f\vert-\mathbf{S}_{\Delta}\Big\vert>\frac{b\Gamma_{n}(\log n)^{1/2}}{n^{1/2-1/\nu}}+\frac{(b\log n)^{1/2}\Gamma_{n}^{3/4}}{n^{1/4}}+\frac{(b\Gamma_{n}^{2}\log n)^{1/3}}{n^{1/6}}\Big)\\
 & \lesssim (\log n)^{-1}+\frac{\log n}{n}.
\end{align*}
In view of \eqref{eq:gaussPr}, we conclude
\begin{align*}
\bigg\vert & \sqrt{n}\sup_{x\in\mathcal X}\vert\tau_\Delta(x)^{1/2}\tilde{m}^{(\varepsilon)}(x)\vert-\mathbf{S}_{\Delta}\bigg\vert\\
 & =\Big\vert\sup_{f\in\mathcal{F}_{\Delta}}\vert\mathbb{G}_{n}f\vert-\mathbf{S}_{\Delta}\Big\vert\\
 & =\mathcal{O}_{\mathbb{P}}\left(\frac{b\Gamma_{n}(\log n)^{1/2}}{n^{1/2-1/\nu}}+\frac{(b\log n)^{1/2}\Gamma_{n}^{3/4}}{n^{1/4}}+\frac{(b\Gamma_{n}^{2}\log n)^{1/3}}{n^{1/6}}\right)\\
 & =\mathcal{O}_{\mathbb{P}}\left(\frac{(\log n)^{3/2}\Delta^{1/2}}{n^{1/2-1/\nu}}+\frac{(\log n)^{5/4}\color{blue}\Delta^{1/4}}{n^{1/4}}+\frac{(\log n)\Delta^{1/6}}{n^{1/6}}\right)
\end{align*}
since $\Gamma_{n}\lesssim\log n$ and $b\lesssim \Delta^{1/2}$. Finally note, that the structure of the covariance \eqref{eq:covariance} implies that
\begin{equation}
\mathbf{S}_{\Delta}\overset{d}{=}\sup_{x\in\mathcal X}\vert B_{\Delta}(f_{x,\Delta})\vert\overset{d}{=}\max_{j=1,\ldots,\Delta}\vert Z_{j}\vert, \label{eq:dist S_delta}
\end{equation}
for independent standard normally distributed $Z_{1},Z_2,\dots$.
\end{proof}

\subsection{Moment bounds for binomial distributions}\label{sec:binom}

We need a uniform bound on centered moments of binomial random variables. To this end, we use \citet[Theorem 4]{Skorski2025}, which gives for any random variable $B\sim\text{Bin}(n,p)$ with $n\in\mathbb N$, $p\in(0,1)$ and for any $q>1$ that
\[
\mathbb{E}\big[(B-np)^{q}\big]^{1/q}=C(n,p,q)\max\big\{k^{1-k/q}(np(1-p))^{k/q}:k=1,\ldots,\lfloor q/2\rfloor\big\},
\]
where $C(n,p,q)$ is uniformly bounded by $(3e)^{-1}\leq C(n,p,q)\leq(5/2)^{1/5}e^{1/2}$. If $np>1$, we obtain
\[
\max\{k^{1-k/q}(n\sigma^{2})^{k/q}:k=1,\ldots,\lfloor q/2\rfloor\}\leq(np)^{\lfloor q/2\rfloor/q}\max\{k^{1-k/q}:k=1,\ldots,\lfloor q/2\rfloor\}.
\]
Hence, for fixed $q>1$ we have
\begin{align}\label{eq:BinomBound}
  \mathbb{E}[(B-np)^{q}]\lesssim(np)^{q/2}\text{ uniformly in }n\text{ and }p.
\end{align}
\begin{lem}
\label{lem:moments binomial fractures}Let $B\sim\text{Bin}(n,p)$
such that $p^{-1}\lesssim n$. For any fixed integer $q>1$ it
holds that
\begin{align*}
\mathbb{E}\left[\mathbb{I}\{B>0\}\left(\frac{1}{B}-\frac{1}{np}\right)^{q}\right] & \lesssim(np)^{-3q/2}.
\end{align*}
\end{lem}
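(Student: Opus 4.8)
The plan is to remove the singular factor $1/(np)$, reduce the claim to a bound on a centred moment of $B$ divided by $B^{q}$, and then split the expectation according to whether $B$ is of the same order as $np$ or much smaller, the latter event being controlled by a high--moment tail estimate. First I would use that on $\{B>0\}$ one has $\bigl|\tfrac1B-\tfrac1{np}\bigr|=\tfrac{|B-np|}{B\,np}$, so that
\[
\mathbb{E}\Bigl[\mathbb{I}\{B>0\}\Bigl(\tfrac1B-\tfrac1{np}\Bigr)^{q}\Bigr]\le(np)^{-q}\,\mathbb{E}\Bigl[\mathbb{I}\{B>0\}\,\frac{|B-np|^{q}}{B^{q}}\Bigr],
\]
and it therefore suffices to prove $\mathbb{E}\bigl[\mathbb{I}\{B>0\}\,|B-np|^{q}/B^{q}\bigr]\lesssim(np)^{-q/2}$. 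Since $p^{-1}\lesssim n$ forces $np\gtrsim1$, and since for $np$ in any bounded range the left-hand side of the lemma is $\mathcal{O}(1)$ while $(np)^{-3q/2}\gtrsim1$ there, I may and will assume $np\ge1$.

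Next I would decompose the probability space into $E_{1}=\{B>np/2\}$ and $E_{2}=\{0<B\le np/2\}$. On $E_{1}$ one has $B^{-q}\le2^{q}(np)^{-q}$, so the contribution of $E_{1}$ is at most $2^{q}(np)^{-q}\,\mathbb{E}[|B-np|^{q}]\lesssim(np)^{-q}(np)^{q/2}=(np)^{-q/2}$, where the centred moment bound $\mathbb{E}[|B-np|^{q}]\lesssim(np)^{q/2}$ is exactly \eqref{eq:BinomBound} when $q$ is even and follows from it for general integer $q$ by Lyapunov's inequality. On $E_{2}$ the integer-valued $B$ satisfies $B\ge1$, hence $B^{-q}\le1$, and $|B-np|=np-B\le np$; therefore the contribution of $E_{2}$ is at most $(np)^{q}\,\mathbb{P}(B\le np/2)\le(np)^{q}\,\mathbb{P}(|B-np|\ge np/2)$. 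Fixing an even integer $m\ge3q$ and applying Markov's inequality together with \eqref{eq:BinomBound} gives
\[
\mathbb{P}(|B-np|\ge np/2)\le\Bigl(\tfrac{2}{np}\Bigr)^{m}\mathbb{E}\bigl[(B-np)^{m}\bigr]\lesssim(np)^{-m/2}\le(np)^{-3q/2},
\]
where $np\ge1$ and $m\ge3q$ are used in the last inequality. Hence the $E_{2}$ contribution is $\lesssim(np)^{q-3q/2}=(np)^{-q/2}$. Adding the two pieces and multiplying back by $(np)^{-q}$ yields the claimed bound $(np)^{-3q/2}$.

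The only genuine obstacle is the blow-up of $1/B^{q}$ on the event that $B$ is atypically small, where one has nothing better than the crude bound $|B-np|^{q}/B^{q}\le(np)^{q}$; to compensate the resulting factor $(np)^{q}$ one needs the lower--tail estimate $\mathbb{P}(B\le np/2)\lesssim(np)^{-3q/2}$. The key point that makes this work is that a Markov bound with a \emph{fixed but sufficiently large} even moment $m$ yields the polynomial decay $\mathbb{P}(|B-np|\ge np/2)\lesssim(np)^{-m/2}$, so the choice $m\ge3q$ is enough; this is also exactly the step where the hypothesis $p^{-1}\lesssim n$ is needed, to turn $(np)^{-m/2}$ into $(np)^{-3q/2}$.
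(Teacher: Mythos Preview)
Your proof is correct. It differs from the paper's argument, which instead uses the exact algebraic identity
\[
\frac{1}{B}-\frac{1}{np}=\frac{np-B}{(np)^{2}}+\frac{(np-B)^{2}}{(np)^{3}}+\frac{(np-B)^{3}}{B(np)^{3}},
\]
raises it to the $q$-th power, and bounds the three resulting terms separately; for the last term the paper simply uses $\mathbb{I}\{B>0\}/B^{q}\le1$ and the moment bound $\mathbb{E}[(np-B)^{3q}]\lesssim(np)^{3q/2}$. Your approach replaces this algebraic trick by the probabilistic decomposition $\{B>np/2\}\cup\{0<B\le np/2\}$ and a high-moment Markov tail bound on the rare event. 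Both arguments rest on the same centred-moment estimate \eqref{eq:BinomBound} and both end up invoking moments of order $3q$; the paper's identity gives a slightly cleaner one-line computation without cases, while your splitting argument is more transparent probabilistically and would carry over verbatim to any integer-valued random variable with comparable centred moments.
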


\begin{proof}
Using simple calculations one gets
\begin{equation}
\frac{1}{B}-\frac{1}{np}=\frac{np-B}{(np)^{2}}+\frac{(np-B)^{2}}{(np)^{3}}+\frac{(np-B)^{3}}{B(np)^{3}}.\label{eq:binom denom decomp}
\end{equation}
Hence we get
\begin{align*}
\mathbb{E} & \left[\mathbb{I}\{B>0\}\left(\frac{1}{B}-\frac{1}{np}\right)^{q}\right]\\
 & \lesssim\mathbb{E}\left[\left(\frac{np-B}{(np)^{2}}\right)^{q}\right]+\mathbb{E}\left[\left(\frac{(np-B)^{2}}{(np)^{3}}\right)^{q}\right]+\mathbb{E}\left[\mathbb{I}\{B>0\}\left(\frac{(np-B)^{3}}{B(np)^{3}}\right)^{q}\right]\\
 & \leq(np)^{-2q}\mathbb{E}\left[\left(np-B\right)^{q}\right]+(np)^{-3q}\mathbb{E}\left[(np-B)^{2q}\right]+(np)^{-3q}\mathbb{E}\left[(np-B)^{3q}\right].
\end{align*}
We apply \eqref{eq:BinomBound} in the case where $p^{-1}=\mathcal{O}(n)$.
We obtain
\begin{align*}
\mathbb{E} & \left[\mathbb{I}\{B>0\}\left(\frac{1}{B}-\frac{1}{np}\right)^{q}\right]\\
 & \lesssim(np)^{-2q}\mathbb{E}\left[(np-B)^{q}\right]+(np)^{-3q}\mathbb{E}\left[(np-B)^{2q}\right]+(np)^{-3q}\mathbb{E}\left[(np-B)^{3q}\right]\\
 & \lesssim(np)^{-3q/2}+(np)^{-2q}+(np)^{-3q/2}
 \lesssim(np)^{-3q/2},
\end{align*}
which yields the claim.
\end{proof}

\begin{lem}\label{lem:pHat}
  Let Assumption~\ref{ass:part} and \eqref{eq:assum histo GP approx} be satisfied. Then we have
  \[
  \sup_{x\in\mathcal X}\Big\vert \frac{\hat p_\Delta(x)}{p_\Delta(x)}- 1\Big\vert=o_\mathbb{P}\big((\log n)^{-3/2})\big).
  \]
\end{lem}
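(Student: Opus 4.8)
The plan is to combine the exact distributional description of the histogram density estimator with a union bound over cells and the uniform binomial moment bound \eqref{eq:BinomBound}. For fixed $x\in[0,1]^p$ the variable $n\hat p_\delta(x)=\sum_{i=1}^n\mathbb I\{X_i\in A_\delta(x)\}$, see \eqref{hatpx}, is $\text{Bin}(n,p_\delta(x))$-distributed, so writing $B=n\hat p_\delta(x)$ and $p=p_\delta(x)$ we have
\[
\frac{\hat p_\delta(x)}{p_\delta(x)}-1=\frac{B-np}{np}.
\]
Both $\hat p_\delta(x)$ and $p_\delta(x)$ depend on $x$ only through the cell $A_\delta(x)$, so the supremum over $x\in[0,1]^p$ is in fact a maximum over the $\delta^{-p}$ cells of the partition; the natural route is therefore a union bound over cells combined with Markov's inequality at a large even moment order.

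First I would fix an even integer $q\ge\nu$. Assumption \eqref{eq:assum histo GP approx} forces $\delta^p n^{1-2/\nu}\to\infty$, hence $\delta^{-p}=o(n^{1-2/\nu})=o(n)$ and in particular $n\delta^p\to\infty$; so by the lower bound $p_\delta(x)\ge c_X\delta^p$ in \eqref{eq:bounds p_x_0} we have $p_\delta(x)^{-1}\lesssim\delta^{-p}\lesssim n$ uniformly over cells, and \eqref{eq:BinomBound} applies to give $\E[(B-np)^q]\lesssim(np)^{q/2}$ uniformly. Markov's inequality applied cell by cell then yields, for every $\kappa>0$,
\[
\P\Big(\sup_{x\in[0,1]^p}\Big|\frac{\hat p_\delta(x)}{p_\delta(x)}-1\Big|>\frac{\kappa}{(\log n)^{3/2}}\Big)
\le\sum_{\text{cells}}\frac{(\log n)^{3q/2}}{\kappa^q}\,\frac{\E[(B-np)^q]}{(np)^q}
\lesssim\frac{\delta^{-p}(\log n)^{3q/2}}{\kappa^q\,(n\delta^p)^{q/2}},
\]
where the last step again uses $p_\delta(x)\ge c_X\delta^p$ in the denominator and absorbs the constant $c_X^{-q/2}$.

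It remains to check that this bound vanishes, which is the only point requiring a little care, since the combinatorial factor $\delta^{-p}$ from the union bound must be beaten by the binomial tail. Writing $n\delta^p=n^{2/\nu}\cdot n^{1-2/\nu}\delta^p$ and using \eqref{eq:assum histo GP approx} in the form $n^{1-2/\nu}\delta^p\gg(\log n)^5$ — which also gives $\delta^{-p}\lesssim n^{1-2/\nu}(\log n)^{-5}$ — one obtains
\[
\frac{\delta^{-p}(\log n)^{3q/2}}{(n\delta^p)^{q/2}}\lesssim n^{1-2/\nu}(\log n)^{-5}\cdot\frac{(\log n)^{3q/2}}{(\log n)^{5q/2}\,n^{q/\nu}}=n^{\,1-(2+q)/\nu}(\log n)^{-q-5},
\]
and since $q\ge\nu$ the exponent of $n$ is at most $-2/\nu<0$, so the right-hand side tends to $0$. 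Hence the displayed probability is $o(1)$ for every $\kappa>0$, which is exactly the claimed $\sup_{x\in[0,1]^p}|\hat p_\delta(x)/p_\delta(x)-1|=o_{\mathbb P}((\log n)^{-3/2})$. The only real obstacle — mild, as befits an auxiliary lemma — is the bookkeeping in this last step: it forces a moment order $q$ large relative to $\nu$, and it is precisely the presence of $n^{1-2/\nu}$ rather than $n$ in assumption \eqref{eq:assum histo GP approx} that makes such a choice possible.
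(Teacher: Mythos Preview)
Your proof is correct and follows essentially the same route as the paper: rewrite the ratio via $B=n\hat p_\delta(x)\sim\text{Bin}(n,p_\delta(x))$, apply a union bound over the $\delta^{-p}$ cells together with Markov's inequality and the moment bound \eqref{eq:BinomBound}, and conclude $\delta^{-p}(\log n)^{3q/2}/(n\delta^p)^{q/2}=o(1)$ for sufficiently large $q$ from assumption \eqref{eq:assum histo GP approx}. The only difference is that you spell out the final $o(1)$ verification in more detail than the paper does.
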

\begin{proof}
Note that
\begin{align*}
    \sup_{x\in\mathcal X}\Big\vert \frac{\hat p_\Delta(x)}{p_\Delta(x)}- 1\Big\vert
    &= \sup_{x\in\mathcal X}\frac{1}{n p_\Delta(x)}\Big|\sum_{i=1}^n \mathbb{I}\{X_i\in A_\Delta(x)\}-np_\Delta(x)\Big|.
\end{align*}
Since $x\mapsto\hat p_\Delta(x)$ is piecewise constant with at most $\Delta$ different values, the supremum can be handled with a union bound. Moreover, we can exploit that $\sum_{i=1}^n \mathbb{I}\{X_i\in A_\Delta(x)\}$ is $Bin(n,p_\Delta(x))$-distributed. The binomial moment result \eqref{eq:BinomBound}  and (\ref{eq:bounds p_x_0}) yield for any $q\in\mathbb{N}$ that
\begin{align*}
&\mathbb{P}\left(  \sup_{x\in\mathcal X}\Big\vert \frac{\hat p_\Delta(x)}{p_\Delta(x)}- 1\Big\vert
\geq\frac{\kappa}{(\log n)^{3/2}}\right)\\
 &\qquad \leq   \frac\Delta{ ( nc_Xc_p/\Delta )^{q} }
 \mathbb{E}\Big[\Big|
 \sum_{i=1}^n \mathbb{I}\{X_i\in A_\Delta(x)\}-np_\Delta(x)\Big|^{q}\Big]
 \frac{(\log n)^{3q/2}}{\kappa^{q}}
 \\
 &\qquad \lesssim\frac{\Delta(\log n)^{3q/2}}{\kappa^q(n/\Delta)^{q/2}} =o(1)
 \end{align*}
 which holds by assumption (\ref{eq:assum histo GP approx}) for large enough $q$.
\end{proof}

\appendix

\section{Extensions}\label{sec:extension}
For the general partition in Assumption \ref{ass:part} we chose disjoint cells, but intersecting  cells are also possible. For this we  fix a mapping $x\mapsto A_\Delta(x)$, where all $(A_\Delta(x))_{x\in\mathcal X}$ satisfy (\ref{ass:cell}), and we assume that $\{ A_\Delta(x)\mid x\in\mathcal{X}\}$ is a subset of some VC-class of sets, not depending on $\Delta$. For instance the cells $A_\Delta(x)$ can be chosen as balls (or hypercubes) with middle point $x$ and radius (side length) depending on $\Delta$. We use the same notation for the estimator $\hat m$ in (\ref{hat-m}) and the assumptions in Theorem \ref{thm:CB histo}. However, $\Delta$ is not the number of cells anymore, and we do not assume that there are only finitely many cells for each $\Delta$. Thus the parts of the proof applying the union bound and the finite cardinality of the function class $\mathcal{F}_\Delta$ have to be modified. The general proof structure is still the same as in Sections \ref{sec:Proof-strategy} and \ref{sec:Proofs}. 
\\
One starts with the same expansion with terms $T_1$, $T_2$ and $T_3$ from (\ref{approxterm}), (\ref{approx-leadingterm}) and (\ref{msigmaremainder}), but with an adjusted $\mathbf{S}_{\Delta}$ discussed below. To handle $T_1$, (\ref{eq:prf histo approx err part 1 bound}) works out the same way, but for  (\ref{eq:prf histo approx err decomp}) and showing negligibility of $T_3$ a different argument is needed. 
Let $P$ denote the distribution of $X_1$ and $P_n$ the empirical distribution based on $X_1,\dots,X_n$. Then one can write 
$$\sup_{x\in\mathcal{X}}|\hat p_\Delta(x)- p_\Delta(x)|=\sup_{g\in\mathcal{G}_n}|P_n g-Pg|$$ 
for the function class $\mathcal{G}_n=\{\mathbb{I}_{A_\Delta(x)}\mid x\in \mathcal{X}\}$ (with $\Delta=\Delta_n$), which is a subset of a VC-class not depending on $n$. We have  $|g|\leq 1$ and $(Pg^2)^{1/2}=(p_\Delta(x))^{1/2}\leq (C_XC_p)^{1/2} \Delta^{-1/2} $ for all $g\in\mathcal{G}_n$ by (\ref{eq:bounds p_x_0}) and (\ref{ass:cell}). Applying Theorem 37 in \citet[p.\ 34]{Pollard}, one obtains
$$\sup_{x\in\mathcal{X}}|\hat p_\Delta(x)- p_\Delta(x)|=o_{\P}(\alpha_n\Delta^{-1})$$
for each positive non-increasing sequence $\alpha_n$ with $(\Delta \log n)/(n\alpha_n^2 )=o(1)$. In view of (\ref{eq:assum histo GP approx}) we can choose $\alpha_n=(\log n)^{-2}$. With this one also obtains a similar result as (\ref{1/hatp}) because
\begin{align*}
   & \sup_{x\in \mathcal X}\left| p_\Delta(x)\left(\frac{1}{\hat p_\Delta(x)}-\frac{1}{p_\Delta(x)}\right)\right| \mathbb{I}\{\hat p_\Delta(x)>0\}\\
    \leq& 
\frac{\sup_{x\in\mathcal{X}}|\hat p_\Delta(x)- p_\Delta(x)|}{\inf_{x\in\mathcal{X}}p_\Delta(x)}\sup_{x\in\mathcal{X}}\frac{p_\Delta(x)}{p_\Delta(x)-\sup_{t\in\mathcal{X}}|\hat p_\Delta(t)- p_\Delta(t)|}=o_{\P}((\log n)^{-2}).
\end{align*}
Further for the term $\mathbb{I}\{\hat p_\Delta(x)=0\}$ in the upper bound of $T_3$ note that  for $\eta>0$
\begin{align*}
\mathbb{P}\left((\log n)^{3/2}\sup_{x\in\mathcal{X}}\mathbb{I}\{\hat p_\Delta(x)=0\}>\eta\right)
\leq& 
\mathbb{P}\left(\exists x\in\mathcal{X}:\hat p_\Delta(x)=0\right)\\
\leq& 
\mathbb{P}\left(\sup_{x\in\mathcal{X}}|\hat p_\Delta(x)-p_\Delta(x)|\geq c_Xc_p\Delta^{-1}\right)=o(1),
\end{align*}
and to handle term (\ref{eq:prf histo approx err decomp}) for $T_1$ works in the same way.
Thus $T_1$ is negligible. For negligibility of $T_3$ one also needs the rate $\mathbf{S}_\Delta=\mathcal{O}_{\mathbb{P}}(\sqrt{\log n})$, and arguments for this are based on the cell structures. We consider this in the examples below. The dominating term $T_2$ can be treated as in Theorem \ref{thm:sup m tilde approx}. For this let $\mathcal{A}_\Delta(x)$ be the VC-class containing all sets $A_\Delta(x)$. Then 
$\mathcal{F}_\Delta$ with the analogous definition as in (\ref{eq:F delta}) and envelope $F$ from (\ref{eq:envelope cherno}) is a subset of the VC-type class
$$\{(t,\epsilon)\mapsto |\epsilon|c\Delta^{1/2}\mathbb{I}_A\mid A\in \mathcal{A}_\Delta(x), c\in[0,(c_Xc_{\sigma^2}c_p)^{-1/2}],x\in\mathcal X\}$$ such that the covering number is bounded by
$$\sup_{Q\in\mathcal{Q}}N(\mathcal{F}_{\Delta},\Vert\cdot\Vert_{Q,2},\kappa\Vert F\Vert_{Q,2})\leq C\kappa^{-V}$$
for some $C$ and $V$ not depending on $\Delta$. So we even get a better upper bound than in (\ref{vc-type}) 
and the remaining part of the proof of Theorem \ref{thm:sup m tilde approx} can be done analogously. 
The approximation  has the structure
\begin{equation*}
\mathbf{S}_{\Delta}\overset{d}{=}\sup_{x\in\mathcal X}\vert B_{\Delta}(f_{x,\Delta})|,
\end{equation*}
where the centered Gaussian process $B_\Delta$ has the covariance formula
$$\cov( B_{\Delta}(f_{x_1,\Delta}),  B_{\Delta}(f_{x_2,\Delta}))=\frac{\int_{A_\Delta(x_1)\cap A_\Delta(x_2)} \sigma^2(t)f_X(t)\,dt}{(s_\Delta(x_1)s_\Delta(x_2))^{1/2}},$$
and the same distribution as the process
\begin{equation}\label{eq:W}
\frac{1}{(s_\Delta(x))^{1/2}}\int \sigma(t)(f_X(t))^{1/2}\mathbb{I}_{A_\Delta(x)}(t)\, dW(t)
\end{equation}
with a $p$-parameter Brownian sheet $W$. Due to the overlapping cells, we don't have an diagonal covariance structure anymore and $\mathbf S_\Delta$ cannot be represented by a finite number of Gaussian random variables anymore.
\\
\begin{example}
    Consider the homoscedastic case with $X\sim U[0,1]^p$. Then the covariance structure simplifies to 
    $$ \cov( B_{\Delta}(f_{x_1,\Delta}),  B_{\Delta}(f_{x_2,\Delta}))=\frac{\vol (A_\Delta(x_1)\cap A_\Delta(x_2))}{(\vol (A_\Delta(x_1))\vol (A_\Delta(x_2)))^{1/2}},  $$ 
    and quantiles of $\mathbf{S}_{\Delta}$ can be directly simulated to obtain a confidence band. For the proof discussed above one needs the convergence rate of $\mathbf{S}_\Delta$. Let us consider the case where $A_\Delta(x)$ is a ball around $x$ (intersected with $[0,1]^p$) with radius $r_\Delta(x)$, which may depend on $x$ with the assumption
    $$ c_r\Delta^{-1/p}\leq r_\Delta(\cdot)\leq C_r\Delta^{-1/p}$$
    for some constants $C_r\geq c_r>0$. Define the distance between $x_1,x_2\in[0,1]^p$ as
    $$d(x_1,x_2)=\big(\var(B_{\Delta}(f_{x_1,\Delta})-B_{\Delta}(f_{x_2,\Delta}))\big)^{1/2}=\left(1-\frac{\vol (A_\Delta(x_1)\cap A_\Delta(x_2))}{(\vol (A_\Delta(x_1))\vol (A_\Delta(x_2)))^{1/2}}\right)^{1/2},$$
    and assume it can be upper bounded by 
    $$d(x_1,x_2)\leq \left(c \|x_1-x_2\|\Delta^{1/p}\right)^{1/2}$$
    for some constant $c$. For instance for dimension $p=3$ one can apply a sphere-sphere intersection formula and assume Lipschitz continuity of the map $x\mapsto r_\Delta(x)$. 
    Then one obtains an upper bound for the covering number $N(\epsilon,\mathcal{F}_\Delta,d)\lesssim (c\Delta^{1/p}/\epsilon^2)^p$. By \citet[Theorem 3.18]{Massart} we obtain
    $$\E [\mathbf{S}_\Delta]  \lesssim\int_0^1 \sqrt{\log((c\Delta^{1/p}/\epsilon^2)^p)}\, d\epsilon=\mathcal{O}(\sqrt{\log\Delta})$$
    and thus $\mathbf{S}_\Delta=\mathcal{O}_{\mathbb{P}}(\sqrt{\log n})$. 
\end{example}
\smallskip
\begin{example}
     Consider the special case where $t\in A_\Delta(x)$ is equivalent to $(x-t)/\delta\in A$ for one fixed set $A$ such that one can write $\mathbb{I}\{t\in A_\Delta(x)\}=K(\frac{x-t}{\delta})$ with  $K=\mathbb{I}_A$ and for example $A=[-1,1]^p$. For the assumptions on $\Delta$ as in Example \ref{example} let $\Delta=\delta^{-p}$. The estimator $\hat m$ in (\ref{hat-m}) is then a Nadaraya-Watson estimator with kernel $K$ and bandwidth $\delta$. 
As in the last steps of classical confidence band proof structures, see  \cite{Chao2017}, one can now replace $\sigma(t)(f_X(t))^{1/2}$ inside the integral (\ref{eq:W}) by $\sigma(x)(f_X(x))^{1/2}$ and use a factor function to obtain the simple process
$$Z_\delta(x)=\frac{1}{\sqrt{\delta^p}}\int K\left(\frac{x-t}{\delta}\right)\, dW(t).$$
For the replacement some assumptions on the functions $\sigma$ and $f_X$ are needed. For the proof structure considered above note that instead of the rate of $\mathbf{S}_\Delta$ one can use the rate $\sup_x |Z_\delta(x)|=\mathcal{O}_{\P}(\sqrt{\log n})$, and one obtains the limit distribution with the classical \cite{Rosenblatt} result. 
We are not aware of literature about asymptotic confidence bands based on Nadaraya-Watson estimators in the multidimensional covariate case for regression functions with  H\"older regularity $\alpha\in(0,1]$. This additionally demonstrates the usefulness of the new proof structure.
\end{example}

\section{Approximation of suprema of empirical processes}\label{appendix}

Denote by $P_n$ the empirical distribution of independent random variables $Z_1,\dots,Z_n$ with distribution $P$. Then we use the empirical process notation 
$\mathbb{G}_{n}f =n^{1/2}(P_nf-Pf)$ indexed in $f\in\mathcal{F}$, where we assume that the function class $\mathcal{F}$ is of VC-type with envelope $F$. This means that there
exist constants $A,v>0$ such that for the covering number 
\[
\sup_{Q\in\mathcal{Q}}N(\mathcal{F},\Vert \cdot\Vert_{Q,2},\varepsilon\Vert F\Vert_{Q,2})\leq(A/\varepsilon)^{v}\qquad\text{for all}\;\varepsilon\in(0,1]
\]
is valid. Here we use the notation $\Vert f\Vert_{Q,2}=(Qf^2)^{1/2}$ and $\mathcal Q$ denotes the set of all finitely discrete probability measures.
The next theorem is Corollary 2.2 by \citet{Chernozhukov2014}. 

\begin{thm}
\label{thm:sup-approx Cherno}Suppose that $\mathcal{F}$ is a pointwise
measurable function class with envelope $F$ that is VC-type for constants
$A\geq e,v\geq1$. Let $P$ denote the distribution of $Z_{1}$  and let $G_{P}$ be a centered Gaussian
process indexed in $\mathcal{F}$ with
\[
\mathbb{E}[G_{P}(f)G_{P}(g)]=P(fg)=\mathbb{E}[f(Z_{1})g(Z_{1})].
\]
Suppose also that for some $b\geq\tilde{\sigma}>0,$ and $\nu\in[4,\infty]$,
we have $\sup_{f\in\mathcal{F}}P\vert f\vert^{k}\leq\tilde{\sigma}^{2}b^{k-2}$
for $k=2,3$ and $\Vert F\Vert_{P,\nu}\leq b$. Then for every $\gamma\in(0,1)$ there exists a random variable $\mathbf{S}\overset{d}{=}\sup_{f\in\mathcal{F}}G_{P}f$
such that
\[
\mathbb{P}\Big(\Big\vert\sup_{f\in\mathcal{F}}\mathbb{G}_{n}f-\mathbf{S}\Big\vert>\frac{b\Gamma_{n}}{\gamma^{1/2}n^{1/2-1/\nu}}+\frac{(b\tilde{\sigma})^{1/2}\Gamma_{n}^{3/4}}{\gamma^{1/2}n^{1/4}}+\frac{(b\tilde{\sigma}^{2}\Gamma_{n}^{2})^{1/3}}{\gamma^{1/3}n^{1/6}}\Big)\leq C\Big(\gamma+\frac{\log n}{n}\Big)
\]
where $\Gamma_{n}=cv(\log n\vee\log(Ab/\tilde{\sigma}))$, and $c,C>0$
are constants that depend only on $\nu$ ($1/\nu$ is interpreted
as $0$ when $\nu=\infty$).
\end{thm}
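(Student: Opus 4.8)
The plan hinges on recognising that this statement is Corollary~2.2 of \citet{Chernozhukov2014} quoted verbatim, so in the paper it is simply cited rather than reproved; what follows sketches the route of that reference. The strategy is a ``discretise, then couple'' argument: first replace $\mathcal F$ by a finite subclass at a carefully chosen resolution while controlling the incurred loss, and then invoke a Gaussian coupling for the maximum of finitely many sums of independent random vectors.

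For the discretisation step I would fix a resolution $\epsilon\in(0,1]$ and, using the VC-type bound $\sup_{Q}N(\mathcal F,\Vert\cdot\Vert_{Q,2},\epsilon\Vert F\Vert_{Q,2})\le(A/\epsilon)^v$, pick an $L^2(P)$-net $\mathcal F_\epsilon\subset\mathcal F$ of cardinality $m\le(A/\epsilon)^v$. The oscillation of the empirical process over $L^2(P)$-balls of radius $\epsilon\Vert F\Vert_{P,2}$ is bounded by a localised maximal inequality of Talagrand/van der Vaart--Wellner type, into which the envelope moment $\Vert F\Vert_{P,\nu}\le b$ enters; the matching oscillation of the Gaussian process $G_P$ is bounded by Dudley's entropy integral together with Borell--TIS concentration. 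Picking $\epsilon$ so that both oscillation terms stay below the asserted rate reduces the task to coupling $\max_{f\in\mathcal F_\epsilon}\mathbb G_nf$ with $\max_{f\in\mathcal F_\epsilon}G_Pf$.

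For the finite-dimensional coupling one writes $\max_{f\in\mathcal F_\epsilon}\mathbb G_nf=\max_{1\le j\le m}n^{-1/2}\sum_{i=1}^n(f_j(Z_i)-Pf_j)$ and applies the Gaussian coupling for maxima of sums of independent mean-zero random vectors from the Chernozhukov--Chetverikov--Kato programme. That result rests on (i) truncation of each summand at a level of order $b$, where the hypotheses $\sup_{f\in\mathcal F}P|f|^k\le\tilde\sigma^2 b^{k-2}$ for $k=2,3$ and $\Vert F\Vert_{P,\nu}\le b$ are precisely what bounds the truncation bias and the Lindeberg-type remainder; (ii) smoothing of the maximum by the ``soft-max'' $\beta^{-1}\log\sum_j e^{\beta x_j}$ together with a Lindeberg/Stein interpolation, which transfers from the truncated sums to a Gaussian vector of the same covariance with error polynomial in $\log m$; and (iii) Nazarov's anti-concentration inequality for Gaussian maxima, which upgrades the resulting Kolmogorov-distance bound to an almost-sure coupling via Strassen's theorem and absorbs the free parameter $\gamma$. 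Substituting $\log m\le v\log(A/\epsilon)$ and balancing against the discretisation errors collapses all logarithmic contributions into $\Gamma_n=cv(\log n\vee\log(Ab/\tilde\sigma))$ and produces the three rate terms in the displayed probability bound, with the $n^{-1/6}$ term the Gaussian-approximation contribution, the $n^{-1/4}$ term the empirical-process/discretisation contribution, and the $n^{-1/2+1/\nu}$ term the truncation contribution.

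The step I expect to be the main obstacle is the finite-dimensional Gaussian coupling with only polylogarithmic dependence on the dimension $m$: arranging the soft-max smoothing, the Stein interpolation, and the truncation so that the error scales with a small power of $\log m$ instead of $\sqrt m$, and under only the stated low-order moment conditions rather than exponential moments. Once that bound is in hand, optimising the net resolution $\epsilon$ against the VC entropy so that the discretisation losses remain negligible is a comparatively routine calculation.
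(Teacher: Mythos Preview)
Your observation is exactly right: in the paper this theorem is not proved but simply quoted as Corollary~2.2 of \citet{Chernozhukov2014}, so there is nothing to compare against. Your sketch of the discretise--then--couple argument behind that corollary is accurate and goes beyond what the paper provides.
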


\bigskip

\bigskip

\noindent{\bf  Acknowledgments. }
The authors are grateful to the Editor-in-Chief, Dietrich von Rosen, an Associate Editor and the reviewers for their valuable comments, which led to an improved manuscript. Moreover, we thank Max Farrell for his insightful remarks.

\bibliographystyle{apalike}
\phantomsection\addcontentsline{toc}{section}{\refname}\bibliography{rf_lit}

\end{document}